\newcommand{\R}{\mathbb R}
\newcommand{\N}{\mathbb N}
\newcommand{\eps}{\varepsilon}
\renewcommand{\phi}{\varphi}
\newcommand{\mC}{\mathcal C}
\renewcommand{\:}{\mathrel{\coloneqq}}
\newcommand{\equivd}{\ensuremath{\vcentcolon\equiv}}
\newcommand{\OT}{{\Omega_{T}}}
\newcommand{\loc}{\textnormal{loc}}
\renewcommand{\d}{\mathrm{d}}
\newcommand{\dd}{\ensuremath{\,\mathrm{d}}}
\DeclareMathOperator{\sgnn}{sgn}
\newcommand{\sgn}[1]{\sgnn\left(#1\right)}
\DeclareMathOperator*{\essinf}{ess-\inf}
\crefname{ineq}{Inequality}{Inequalities}
\crefname{assumption}{Assumption}{Assumptions}
\newtheorem{definition}{Definition}[section]
\newtheorem{theorem}{Theorem}[section]
\newtheorem{lemma}{Lemma}[section]
\newtheorem{corollary}{Corollary}[section]
\newtheorem{assumption}{Assumption}[section]
\newtheorem{remark}{Remark}[section]
\newtheorem*{maintheorem*}{Main Theorem}
\begin{document}

\title{A general result on the approximation of local conservation laws by nonlocal conservation laws: The singular limit problem for exponential kernels}


\author[G. M. Coclite]{Giuseppe Maria Coclite}
\address[G. M. Coclite]{Polytechnic University of Bari, Department of Mechanics, Mathematics, and Management, Via E. Orabona 4, 70125 Bari, Italy.}
\ead{giuseppemaria.coclite@poliba.it}

\author[J.-M. Coron]{Jean-Michel Coron}
\address[J.-M. Coron]{Université Pierre et Marie Curie, Laboratoire Jacques-Louis Lions, Place Jussieu 4, 75252 Paris, France.}
\ead{coron@ann.jussieu.fr}

\author[N. De Nitti]{Nicola De Nitti}
\address[N. De Nitti]{Friedrich-Alexander-Universität Erlangen-Nürnberg, Department of Mathematics, Chair of Applied Analysis (Alexander von Humboldt Professorship), Cauerstr. 11, 91058 Erlangen, Germany.}
\ead{nicola.de.nitti@fau.de}

\author[A. Keimer]{Alexander Keimer}
\address[A. Keimer]{Institute for Transportation Studies (ITS), University of California, Berkeley, 94720 Berkeley, CA, USA.}
\ead{keimer@berkeley.edu}

\author[1,2]{Lukas Pflug}
\address[1]{Friedrich-Alexander-Universität Erlangen-Nürnberg, Central Institute for Scientific Computing, Martensstr. 5a, 91058 Erlangen, Germany.}
\address[2]{Friedrich-Alexander-Universität Erlangen-Nürnberg,  Department of Mathematics, Chair of Applied Mathematics (Continuous Optimization), Cauerstr. 11, 91058 Erlangen, Germany.}
\ead{lukas.pflug@fau.de}

\begin{abstract}
We deal with the problem of approximating a scalar conservation law by a conservation law with nonlocal flux. As convolution kernel in the nonlocal flux, we consider an exponential-type approximation of the Dirac distribution. This enables us to obtain a total variation bound on the nonlocal term. By using this, we prove that the (unique) weak solution of the nonlocal problem converges strongly in \(C(L^{1}_{\text{loc}})\) to the entropy solution of the local conservation law. 
We conclude with several numerical illustrations which underline the main results and, in particular, the difference between the solution and the nonlocal term.

\end{abstract}
\begin{keyword} Nonlocal conservation laws, nonlocal flux, balance laws, singular limits, approximation of local conservation laws, entropy solution.
\MSC[2010]{35L65}
\end{keyword}
\maketitle

\section{Introduction}\label{sec:intro} 
Nonlocal conservation laws have been studied and analyzed quite intensively over the last decade from an application point of view with a particular focus on traffic flow \cite{bayen2020modeling,CGLM,scialanga,MR3890783,ridder2019traveling,huang2020stability}, supply chains \cite{sarkar,MR2737820,gong2019weak}, pedestrian flow/crowd dynamics \cite{colombo2020crowd}, opinion formation \cite{aletti2007first,piccoli2018sparse}, chemical engineering \cite{pflug2020emom,spinola2020model}, sedimentation \cite{MR2772627}, conveyor belts \cite{rossi2020well} and more. For the underlying dynamics existence and uniqueness \cite{keimer2,MR3670045,MR3890783,MR3818104,veeravalli,chiarello2019stability,crippa2013existence}, (optimal) control problems  \cite{groeschel,BCDKP2020,MR3986456,colombo,karafyllis2020analysis,coron}, and suitable numerical schemes \cite{aggarwal,chalons2018high,chiarello2020lagrangian,friedrich2018godunov,piccoli2013transport} have been analyzed.

In this work, ``nonlocal'' refers to the fact that the velocity \(V\) of the corresponding flux, i.e.,\ \(f(s)=V(s)s,\ s\in\R\),
does not depend on the solution locally at a given space point but on an integral of the solution on a neighborhood.

First in \cite{MR3342191} it has been observed that, at least numerically, there is some hope that the solution of the nonlocal conservation law converges to the local entropy solution when the nonlocal term approaches a Dirac delta. Positive results in this direction were obtained in \cite{MR1704419}, provided that the limit entropy solution is smooth and the convolution kernel is even, and in \cite{MR3944408} for a large class of nonlocal conservation laws under the assumption of having monotone initial data. Under the assumption that the initial datum has bounded total variation, is bounded away from zero and satisfies a one-sided Lipschitz condition, a positive result was obtained in \cite{1808.03529}. In \cite{MR4110434}, for an exponential weight in the nonlocal term, is was shown -- provided that the initial datum is bounded away from zero and has bounded total variation (but without monotonicity assumptions) -- that, the nonlocal solutions converge (up to subsequences) to weak solutions of the corresponding local conservation law; they also showed that the limit is the unique entropy solution  under the additional assumption that $V$ is an affine function. More recently, in \cite{bressanshen2020}, the result was extended to more general fluxes. 

A viscous nonlocal conservation law with kernel of exponential type was considered in \cite{CDKP2020}: as the nonlocal term together with the viscosity approaches zero, the sequence of solutions converges to the local entropy solution. The positive effect of viscosity in the nonlocal-to-local approximation process was previously studied in \cite{MR3961295,1902.06970} for more general compactly supported kernels (see also \cite{MR715133} in the case of more regular initial data and linear velocity). In contrast to the proof in \cite{MR3961295}, which was based on a priori estimates obtained by extensively using energy estimates for the heat kernel and the Duhamel representation formula, in \cite{CDKP2020}, the authors established an energy estimate on the nonlocal term by relying just on the special structure provided by the exponential kernel (as in Remark \ref{rem:localW} below) and used it to apply Tatar's compensated compactness theory (see \cite{T}).

In conclusion, although some progress has been made under quite restrictive assumptions, a general theory is missing. Even more, \cite{1808.03529} demonstrates via a counterexample that a total variation blow-up of the solution of the nonlocal conservation law can occur if the data is not bounded away from zero so that the standard methods via compactness in \(L^{1}\) seemed to be out of reach.

This is why, in this work, we focus on the nonlocal term: it turns out that the nonlocal term itself satisfies a local transport equation with nonlocal source (see \cref{lem:transport_nonlocal_rhs}) and we can use this to show a uniform total variation bound (see \cref{theo:total_variation_bound}). Thanks to the specific structure of the nonlocal term this directly implies that also the solution of the conservation law converges strongly in \(L^{1}\) (see \cref{theo:compactness} and \cref{cor:limit}). 

More precisely, in the present paper, we consider the following setting.
For a  nonlocal parameter \(\eta\in\R_{>0}\), let \(q_{\eta}\) be the unique weak solution (weak solutions are  unique in the nonlocal setup) of the nonlocal conservation law on \(\R\)
    \begin{align*}
\partial_t q_\eta(t,x)  + \partial_x\big(V\big(W_{\eta}[q_\eta](t,x))q_\eta(t,x)\big)   	&= 0	&& (t,x)\in\Omega_{T} \\ 
q_\eta(0,x) &= q_0(x)												 	&&  x\in\R
\intertext{with \(\OT \: (0,T) \times \R\), supplemented by the nonlocal term \(W_{\!\eta}\) with exponential weight}
    W_{\eta}[q](t,x)&\coloneqq\tfrac{1}{\eta}\int_{x}^{\infty} \exp(\tfrac{x-y}{\eta})q(t,y)\dd y &(t,x)\in\Omega_{T}
\end{align*}
and let \(q\) be the entropy solution of the corresponding local conservation law on \(\R\)
\begin{align}
    \partial_t q(t,x)  + \partial_x\big(V\big(q(t,x)\big)q(t,x)\big)   	&= 0	& (t,x)\in\Omega_{T} \label{eq:local_conservation_law}\\ 
q(0,x) &= q_0(x)								 	&  x\in\R.\label{eq:local_conservation_law_initial_datum}
\end{align}
For the ``local theory'' and corresponding Entropy solutions, we refer to \cite{MR1816648,godlewski,MR3468916, MR3443431}.
Finally, in \cref{theo:entropy}, we prove the convergence of the nonlocal solution to the local entropy solution when \(\eta\) approaches zero, i.e., the nonlocal term approaches a Dirac distribution
\[
q_{\eta} \stackrel{\eta \rightarrow 0}{\longrightarrow} q \ \text{ in } C\big([0,T];L^{1}_{\loc}(\R)\big).
\]
We do this by first analyzing the nonlocal term \(W_{\eta}[q_\eta]\) which -- as can be shown -- satisfies its own transport equation with a nonlocal source and possess a uniform total variation bound. Thanks to the relation \(\eta\partial_{2}W_{\eta}[q_{\eta}]\equiv W_{\eta}[q_{\eta}]-q_{\eta}\) it immediately follows also the strong convergence of \(q\) to a weak solution of the local conservation law. Then, we can use \cite{bressanshen2020} to obtain that the solution  is indeed also entropic. Even more, we also obtain that the nonlocal term \(W_{\eta}[q_{\eta}]\) also converges to the local entropy solution.

Our  ``nonlocal-to-local convergence'' result closes the gap between local and nonlocal modelling of phenomena governed by conservation laws; moreover, it provides a way of defining the entropy admissible solutions of local conservation laws as limits of weak solutions to nonlocal conservation laws, which usually do not require an entropy condition for uniqueness (see \cite{MR2679644,pflug,MR3818104,MR3890783}). This kind of singular limit would be an alternative to the classical vanishing viscosity approach (see \cite{MR3443431,MR1816648,MR3468916} and references therein). In the case of a nonlocal approximation without artificial viscosity, no smoothing phenomena happen and the character of the approximating equation remains somewhat ``hyperbolic'' (finite propagation of mass, but infinite propagation of information).

Such a convergence result would also give additional insights into questions related to control theory (see \cite{BCDKP2020}), in the spirit of \cite{MR2176274,MR2372489,MR2346380,MR2968071}. Showing control results for nonlocal conservation laws might be easier due to the fact that these equations are invertible in time, so that one can actually go back from a current state to the initial datum. Optimal control problems might also become mathematically more approachable as the problem with adjoint equations and shocks of the local equations prohibiting differentiability in a certain local framework might be resolvable in the nonlocal theory and one might then just consider the limit controls when the nonlocal term approaches a Dirac.


\section{Preliminary results on nonlocal conservation laws}
\label{sec:prelim}

\begin{definition}[The nonlocal conservation law and the weak solution]\label{defi:nonlocal_conservation_law_general}
Let \(T\in\R_{>0}\) be given. We consider for \(\eta\in\R_{>0}\) the following nonlocal conservation law in the ``density" \(q_{\eta}:\OT\rightarrow\R,\ \OT\:(0,T)\times\R\)
\begin{align}
\partial_t q_\eta(t,x)  + \partial_x\big(V\big(W_{\eta}[q_\eta](t,x))q_\eta(t,x)\big)   	&= 0	& (t,x)\in\Omega_{T} \label{eq:nonlocal_conservation_law} \\ 
q_\eta(0,x) &= q_0(x)												 	&  x\in\R\label{eqn:b}
\intertext{supplemented by the nonlocal term \(W_{\!\eta}\)}
    W_{\eta}[q_\eta](t,x)&\coloneqq\tfrac{1}{\eta}\int_{x}^{\infty} \exp(\tfrac{x-y}{\eta})q_\eta(t,y)\dd y &(t,x)\in\Omega_{T}.\label{defi:W_nonlocal}
\end{align}
We call \(q_{0}:\R\rightarrow\R\) \textbf{initial datum} and  \(W_{\eta}[q_\eta]:\OT\rightarrow\R\) the \textbf{nonlocal impact} affecting the \textbf{velocity function} \(V:\R\rightarrow\R\) of the nonlocal conservation law. 
We say that \(q_\eta\in C\big([0,T];L^{1}_{\loc}(\R)\big)\) is a weak solution for \(q_{0}\in L^{1}_{\loc}(\R)\) and $\eta \in \R_{>0}$ iff \(\forall \phi\in C^{1}_{\text{c}}((-42,T)\times\R)\) it holds that
\begin{equation}
\iint_{\OT}\partial_t\phi(t,x)q_\eta(t,x)+\partial_x\phi(t,x)V(W_{\!\eta}[q_\eta](t,x))q_\eta(t,x)\dd x\dd t+\int_{\R}\phi(0,x)q_{0}(x)\dd x =0 .
\label{eq:weak_solution_1}
 \end{equation}
\end{definition}
For the analysis and well-posedness, we require the following not restrictive assumptions:
\begin{assumption}[Assumptions on input data]\label{ass:input_datum}
The involved functions in \cref{defi:nonlocal_conservation_law_general} satisfy 
 \[ \bullet \ q_{0}\in L^{\infty}(\R;\R_{\geq0})\cap TV(\R)
   \qquad \bullet \ V\in W^{1,\infty}_{\loc}(\R):\ V'(s)\leq 0 \ \forall s \in \big(\essinf_{x\in \R} q_0(x),\|q_0\|_{L^\infty(\R)}\big).\]
\end{assumption}

\begin{theorem}[Existence and uniqueness of weak solutions and maximum principle]\label{theo:existence_uniqueness}
Given \cref{ass:input_datum} there exists a unique weak solution \(q\in C\big([0,T];L^{1}_{\text{loc}}(\R)\big)\cap L^{\infty}((0,T);L^{\infty}(\R))\cap L^{\infty}((0,T);TV(\R))\) of the nonlocal conservation law in \cref{defi:nonlocal_conservation_law_general} and the following maximum principle is satisfied
\begin{align}
    \essinf_{x\in\R} q_{0}(x)\leq q(t,x)\leq \|q_{0}\|_{L^{\infty}(\R)} \text{ a.e. } (t,x)\in\OT .\label{eq:uniform_bound}
\end{align}
\end{theorem}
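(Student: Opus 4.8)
The plan is to construct the solution by a fixed-point/iteration scheme in the nonlocal term and exploit the fact that, once \(W_\eta[q_\eta]\) is frozen, the equation \eqref{eq:nonlocal_conservation_law} is a \emph{linear} transport equation whose characteristics are well-defined. First I would fix a prospective nonlocal term: given \(w\in C([0,T];W^{1,\infty}_{\loc})\) with values in the admissible range \([\essinf q_0,\|q_0\|_{L^\infty}]\), consider the linear conservation law \(\partial_t q + \partial_x(V(w)q)=0\), \(q(0,\cdot)=q_0\). Since \(s\mapsto V(s)\) is locally Lipschitz and \(w\) is Lipschitz in \(x\), the coefficient \(V(w(t,x))\) is Lipschitz in \(x\) and continuous in \(t\), so the characteristic ODE \(\dot\xi = V(w(t,\xi))\) has unique solutions and the flow \(X^{s,t}\) is a bi-Lipschitz homeomorphism of \(\R\). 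Then \(q(t,x) = q_0(X^{t,0}(x))\,\partial_x X^{t,0}(x)\) (equivalently the pushforward of \(q_0\,\d x\) along the flow) is the unique weak solution; because \(q_0\ge 0\) and the flow is orientation-preserving, \(q(t,\cdot)\ge 0\), and since the flow is measure-preserving in the sense that total mass on a transported interval is conserved while the Jacobian bound comes from \(\partial_x V(w)\), one gets \(L^\infty\) bounds — but one must be slightly careful here, since the Jacobian can expand or contract. The cleaner route for the maximum principle is to note that \(\essinf q_0\) and \(\|q_0\|_{L^\infty}\) are themselves (stationary) solutions of the \emph{nonlocal} equation (a constant \(c\) gives \(W_\eta[c]=c\), hence \(\partial_x(V(c)c)=0\)), so a comparison principle for the nonlocal equation — or for the linearized equation along the iteration — pins the solution between them.

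Next I would set up the fixed-point map \(\Phi: w\mapsto W_\eta[q_w]\), where \(q_w\) is the weak solution of the frozen linear problem above, and show \(\Phi\) is a contraction (or at least has a fixed point) on a suitable complete metric space — e.g. \(C([0,T^\ast];L^1_{\loc})\) for \(T^\ast\) small, intersected with the maximum-principle box and a uniform \(TV\) ball. The key smoothing is that \(W_\eta\) gains a spatial derivative: from \eqref{defi:W_nonlocal}, \(\partial_x W_\eta[q](t,x) = \tfrac1\eta\big(W_\eta[q](t,x)-q(t,x)\big)\), so \(W_\eta[q]\) is Lipschitz in \(x\) with constant controlled by \(\tfrac1\eta(\|q\|_\infty + \|q\|_\infty)\), and \(\|W_\eta[q]\|_{L^\infty}\le\|q\|_{L^\infty}\), \(TV(W_\eta[q])\le TV(q)\) by Jensen/averaging. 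Lipschitz continuity of \(\Phi\) in the \(C([0,T^\ast];L^1_{\loc})\) norm follows from stability of the linear transport flow with respect to the coefficient \(V(w)\): two coefficients \(V(w_1), V(w_2)\) differing by \(\|w_1-w_2\|\) produce characteristic flows differing by \(C\,T^\ast\,\|w_1-w_2\|\) (Grönwall), hence solutions differing by \(C\,T^\ast\,(\|w_1-w_2\| + \text{TV-weighted terms})\), and then \(\Phi\) contracts for \(T^\ast\) small. Uniform \(a\ priori\) bounds (the box \eqref{eq:uniform_bound} and a \(TV\) bound independent of \(T^\ast\)) let me iterate to cover \([0,T]\).

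The \(TV\) and \(L^\infty\) bounds that make the iteration global are where the real work sits, and I expect the \(TV\) propagation to be the main obstacle: one must show \(TV(q_\eta(t,\cdot))\) does not blow up, using that along characteristics \(q\) satisfies (formally) \(\tfrac{d}{dt}q = -q\,\partial_x V(w) = -q\,V'(w)\,\partial_x w\), and then differentiating in \(x\) to get a Grönwall inequality for \(\partial_x q\) in which the sign condition \(V'\le 0\) on the admissible range is used to control the bad term — this is exactly the place where \cref{ass:input_datum} is essential. Once uniform \(L^\infty\) and \(TV\) bounds hold on \([0,T]\), continuity in time \(q_\eta\in C([0,T];L^1_{\loc})\) follows from the transport representation, uniqueness follows from the contraction estimate applied on successive short intervals (or directly from an \(L^1\) stability estimate for two solutions), and the maximum principle \eqref{eq:uniform_bound} follows from comparison with the constant solutions \(\essinf q_0\) and \(\|q_0\|_{L^\infty}\). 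I would remark that all of this is essentially contained in the existing well-posedness literature cited in the introduction (e.g. \cite{keimer2,MR3890783,MR4110434}), so in the paper this theorem would likely be proved by invoking those results and merely checking that \cref{ass:input_datum} meets their hypotheses.
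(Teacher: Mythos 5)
The paper offers no proof of its own here: it simply cites \cite[Theorem 2.20, Theorem 3.2, Corollary 4.3]{MR3670045}, and your plan (freeze the nonlocal term, solve the linear transport problem along characteristics, run a Banach fixed point on \(w\mapsto W_\eta[q_w]\) with a priori \(L^\infty\) and \(TV\) bounds, and obtain the invariant box from the sign of \(V'\) combined with \(\partial_x W_\eta[q]=\tfrac1\eta(W_\eta[q]-q)\)) is essentially the argument of that cited reference — you even correctly predicted that the paper would resolve the theorem by citation. The only step I would phrase more carefully is the maximum principle: nonlocal conservation laws do not in general admit a comparison principle, so the bound \eqref{eq:uniform_bound} should be derived, as in the cited work, from the ODE \(\tfrac{\d}{\d t}q=-\tfrac1\eta q\,V'(W_\eta)(W_\eta-q)\) along characteristics together with the one-sidedness of the kernel, rather than from comparison with constant solutions of the nonlocal equation.
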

\begin{proof}
See \cite[Theorem 2.20 \& Theorem 3.2 \& Corollary 4.3]{MR3670045}.
\end{proof}
\begin{remark}[Generalization of the assumptions on the velocity function \(V\)]\label{rem:velocity_generalization}
The assumption on \(V\) being monotonically decreasing (see \cref{ass:input_datum}) can be changed to \(V\) monotonically increasing as long as one also changes the nonlocal range for \(q\in C\big([0,T];L^{1}_{\loc}(\R)\big)\) as
\[
W_{\eta}[q](t,x)\: \int_{-\infty}^{x}\exp(\tfrac{y-x}{\eta})q(t,y)\dd y,\quad (t,x)\in\OT.
\]
Analogously, the results can be extended to hold also for non-positive initial datum when changing the nonlocal term accordingly. We do not go into details.

Even more when assuming that \(V'(s)s\) has a sign for all \(s\in\R\), one does not need even a maximum principle to be satisfied and thus the initial datum can be chosen arbitrarily in \(L^{\infty}(\R)\cap TV(\R)\) (no sign restrictions). However, then one does not obtain convergence of \(q_{\eta}\) but of \(W_{\eta}\) which remains essentially bounded and for which the total variation bound derived in \cref{theo:total_variation_bound} still holds. However, \cref{theo:entropy} is not directly applicable and we are left with that the limit is a weak solution. Compare also \cref{rem:TV_velocity}.
\end{remark}

\section{Total variation bound on the nonlocal term}
\label{sec:TV}

As we will tackle the convergence first in the nonlocal term, \(W_{\eta}[q_\eta]\), we deduce a transport equation with a nonlocal source which will enable us to study \(W_{\eta}[q_\eta]\) without \(q_\eta\) itself.
\begin{lemma}[The transport equation with nonlocal source satisfied by the nonlocal term]\label{lem:transport_nonlocal_rhs}
Given the dynamics in \cref{defi:nonlocal_conservation_law_general}, the nonlocal term \(W_{\eta}[q_{\eta}]\) as in \cref{eq:W} is Lipschitz-continuous and satisfies the following transport equation with nonlocal source in the strong sense
\begin{align}
    \partial_t W_\eta(t,x)+V(W_\eta(t,x))\partial_x W_\eta(t,x)&=-\tfrac{1}{\eta}\int_{x}^{\infty}\exp(\tfrac{x-y}{\eta})V'(W_\eta(t,y))\partial_y W_\eta(t,y)W_\eta(t,y)\dd y && (t,x)\in\OT\label{eq:W}\\
    W_{\eta}(0,x)&=\tfrac{1}{\eta}\int_{x}^{\infty}\exp(\tfrac{x-y}{\eta})q_{0}(y)\dd y && x\in\R.\label{eq:initial_W}
\end{align}
In particular, for \(\eta\in\R_{>0}\), we have \(W_{\eta}\in W^{1,\infty}(\OT)\).
\end{lemma}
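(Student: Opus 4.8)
The plan is to read off the whole statement from two elementary facts about $\We[\qe]$: it can be differentiated in $x$ directly from its definition \cref{defi:W_nonlocal}, while its time derivative is extracted from the weak formulation \cref{eq:weak_solution_1}. Throughout write $\We\:\We[\qe]$ and $I\:\big[\essinf_{x\in\R}q_0(x),\|q_0\|_{L^\infty(\R)}\big]$; by \cref{theo:existence_uniqueness}, and since $\tfrac1\eta\int_x^\infty\e^{(x-y)/\eta}\dd y=1$, both $\qe$ and $\We$ take values in the compact interval $I$ (a.e.), on which $V$ is Lipschitz. First, rewriting $\We(t,x)=\tfrac{\e^{x/\eta}}{\eta}\int_x^\infty \e^{-y/\eta}\qe(t,y)\dd y$ displays $\We(t,\cdot)$ as a product of a smooth factor and a locally absolutely continuous one (the integral has an $L^1$ integrand by the maximum principle), whence $\We(t,\cdot)$ is locally absolutely continuous with $\partial_x\We(t,x)=\tfrac1\eta\big(\We(t,x)-\qe(t,x)\big)$ for a.e. $x$; since $|\We-\qe|\le|I|$ this yields $\partial_x\We\in L^\infty(\OT)$ with $\|\partial_x\We\|_{L^\infty(\OT)}\le|I|/\eta$, i.e. $\We(t,\cdot)$ is globally Lipschitz, and $t\mapsto\We(t,x)$ is continuous because $\qe\in C([0,T];L^1_{\loc}(\R))$ and the uniform bound makes the exponential tail uniformly integrable.

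Next I would obtain the time derivative. Testing \cref{eq:weak_solution_1} with $\phi(t,y)=\psi(t)\chi(y)$, $\psi\in C^1_{\text{c}}((-42,T))$, $\chi\in C^1_{\text{c}}(\R)$, shows that $t\mapsto\int_\R\chi(y)\qe(t,y)\dd y$ is Lipschitz on $[0,T]$, equals $\int_\R\chi(y)q_0(y)\dd y$ at $t=0$, and has distributional derivative $\int_\R\chi'(y)V(\We(t,y))\qe(t,y)\dd y$ on $(0,T)$. I would then let $\chi=\chi_m\to K$ in $L^1(\R)$ with $K(y)\:\tfrac1\eta\e^{(x_0-y)/\eta}\mathbb{1}_{y>x_0}$ (bounded, exponentially decaying, $BV$), choosing $\chi_m=K$ off a shrinking neighbourhood of $x_0$. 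Since $\qe(t,\cdot)\in TV(\R)$ and $\We(t,\cdot)$ is Lipschitz, the flux $y\mapsto V(\We(t,y))\qe(t,y)$ lies in $BV_{\loc}(\R)\cap L^\infty(\R)$ with locally finite derivative measure, so integrating by parts against that measure is legitimate; the exponential decay of $K$ kills the contribution at $+\infty$, and the upward jump of $K$ at $x_0$ contributes $\tfrac1\eta V(\We(t,x_0))\qe(t,x_0)$ provided $x_0$ is not a jump point of $\qe(t,\cdot)$ — hence for a.e. $x_0$, by Fubini. This produces, for a.e. $(t,x_0)$, the identity
\[\partial_t\We(t,x_0)=\tfrac1\eta V(\We(t,x_0))\qe(t,x_0)-\tfrac1{\eta^2}\int_{x_0}^\infty\e^{(x_0-y)/\eta}V(\We(t,y))\qe(t,y)\dd y;\]
letting $s\to0$ in the corresponding integrated identity gives the initial condition \cref{eq:initial_W} (using $\qe(0,\cdot)=q_0$), and since the right-hand side above is bounded by $\tfrac2\eta\|V\|_{L^\infty(I)}\|q_0\|_{L^\infty(\R)}$, we obtain $\partial_t\We\in L^\infty(\OT)$. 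Together with the first step and the convexity of $\OT$ this gives $\We\in W^{1,\infty}(\OT)$, so $\We$ has a Lipschitz representative.

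Finally I would insert $\qe=\We-\eta\partial_x\We$ into the identity just obtained and integrate by parts once in $y$ in the term $\tfrac1{\eta^2}\int_x^\infty\e^{(x-y)/\eta}V(\We)\We\dd y$; this is now harmless, since $y\mapsto V(\We(t,y))\We(t,y)$ is genuinely Lipschitz with vanishing limit at $+\infty$, using the chain rule $\partial_y\big(V(\We)\We\big)=\big(V'(\We)\We+V(\We)\big)\partial_y\We$ a.e. The two copies of $\tfrac1\eta V(\We)\We$ cancel, the two copies of $\tfrac1\eta\int_x^\infty\e^{(x-y)/\eta}V(\We)\partial_y\We\dd y$ cancel, and what survives is exactly
\[\partial_t\We+V(\We)\partial_x\We=-\tfrac1\eta\int_x^\infty\e^{(x-y)/\eta}V'(\We(t,y))\partial_y\We(t,y)\We(t,y)\dd y\]
a.e., which is \cref{eq:W} in the strong sense. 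The main obstacle is the middle step: the obvious test function — an $x$-primitive of $K$ — is not compactly supported (it tends to a nonzero constant at $+\infty$), so one must test against $K$ itself and pass to the limit by hand, and this is exactly where one genuinely needs the $TV$-bound on $\qe(t,\cdot)$ (to integrate by parts against the flux's derivative measure), the uniform Lipschitz bound on $\We(t,\cdot)$ together with the exponential decay of $K$ (to control that measure at spatial infinity, uniformly in $t$), and a Fubini argument to discard the null set of $x_0$ lying on a jump of some $\qe(t,\cdot)$. The remaining work is just the explicit computation of the first paragraph and the bookkeeping cancellation of the third.
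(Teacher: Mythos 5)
Your proof is correct, and it reaches the paper's intermediate identity
\(\partial_t W_\eta=\tfrac1\eta V(W_\eta)q_\eta-\tfrac1{\eta^2}\int_x^\infty \e^{(x-y)/\eta}V(W_\eta)q_\eta\dd y\)
by a genuinely different route. The paper gets the spatial derivative exactly as you do (via \(\partial_x W_\eta=\tfrac1\eta(W_\eta-q_\eta)\)), but for the time derivative it rewrites \(q_\eta\) along the characteristics of the nonlocal equation (using the regularity of the flow \(\xi\) from \cite[Lemma 2.6]{MR3670045}) and differentiates under the integral after a change of variables; you instead extract \(\partial_t\int K(\cdot-x_0)q_\eta\) from the weak formulation by approximating the discontinuous, non-compactly-supported kernel \(K\) with admissible test functions and passing to the limit using the \(BV\) structure of the flux and a Fubini argument to handle the atom of \(\d F\) at \(x_0\). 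Your route is more self-contained (it needs only the weak formulation, the maximum principle and the \(L^\infty(TV)\) bound from \cref{theo:existence_uniqueness}, not the differentiability of the characteristic flow), at the price of the measure-theoretic bookkeeping you correctly identify as the main obstacle; the paper's route gets the identity in one computation but imports the characteristics machinery. The final substitution \(q_\eta=W_\eta-\eta\partial_x W_\eta\), the integration by parts, and the two cancellations are identical to the paper's. One small inaccuracy: \(V(W_\eta(t,\cdot))W_\eta(t,\cdot)\) need not vanish at \(+\infty\) (e.g.\ for the initial datum \cref{eq:ex_initial_datum} used in \cref{sec:numerics}); this is harmless because the boundary term at \(+\infty\) is killed by the factor \(\e^{(x-y)/\eta}\) and the boundedness of \(V(W_\eta)W_\eta\), not by decay of the flux itself.
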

\begin{proof}
We first show that \(W_{\eta}[q_{\eta}]\) is Lipschitz-continuous. To this end, recall the definition \cref{eq:W} and compute for \((t,x)\in\OT\)
\begin{align}
    \partial_{x}W_{\eta}[q_{\eta}](t,x)&=\partial_{x} \tfrac{1}{\eta}\int_{x}^{\infty}\exp(\tfrac{x-y}{\eta})q_{\eta}(t,y)\dd y=\tfrac{1}{\eta}W_{\eta}[q_{\eta}](t,x)-\tfrac{1}{\eta}q_{\eta}(t,x).\label{eq:WWxq}
\end{align}
However, as \(\eta\in\R_{>0}\), \(W_{\eta}[q_{\eta}]\in L^{\infty}((\OT))\) and  \(q_{\eta}\in L^{\infty}((\OT))\) thanks to \cref{theo:existence_uniqueness}, we obtain the uniform boundedness on the spatial derivative.
The time derivative is slightly more tricky. Due to the lack of regularity we use the method of characteristics analyzed in  \cite[Lemma 2.6]{MR3670045} to write down the solution \(q_{\eta}\) and have on \((t,x)\in\OT\)
\begin{align}
    \partial_{t}W_{\eta}[q_{\eta}](t,x)&=\partial_{t} \tfrac{1}{\eta}\int_{x}^{\infty}\exp(\tfrac{x-y}{\eta})q_{\eta}(t,y)\dd y\\
    &=\partial_{t}\tfrac{1}{\eta}\int_{x}^{\infty}\exp(\tfrac{x-y}{\eta})q_{0}(\xi(t,y;0))\partial_{2}\xi(t,y;0)\dd y\\
    &=\partial_{t}\tfrac{1}{\eta}\int_{\xi(t,x;0)}^{\infty}\exp\Big(\tfrac{x-\xi(0,z;t)}{\eta}\Big)q_{0}(z)\dd z\\
    &=-\tfrac{1}{\eta^{2}}\int_{\xi(t,x;0)}^{\infty}\exp\Big(\tfrac{x-\xi(0,z;t)}{\eta}\Big)q_{0}(z)\partial_{3}\xi(0,z;t)\dd z  -\tfrac{1}{\eta}q_{0}(\xi(t,x;0))\partial_{1}\xi(t,x;0).\label{eq:W_diff_computation}
\end{align}
Recalling some nice properties of the characteristics \cite[Lemma 2.6]{MR3670045} and in particular
\begin{align*}
    \partial_{3}\xi(0,\xi(t,y;0);t)&=V(W_{\eta}[q_{\eta}](t,y)) &&\forall (t,y)\in\OT\\
    \partial_{1}\xi(t,y;0)&=-\partial_{2}\xi(t,y;0)V(W_{\eta}[q_{\eta}](t,y))&& \forall (t,y)\in\OT
\end{align*}
we obtain by continuing \cref{eq:W_diff_computation}
\begin{align*}
\partial_{t}W[q_{\eta}](t,x)&=-\tfrac{1}{\eta^{2}}\int_{\xi(t,x;0)}^{\infty}\exp\Big(\tfrac{x-\xi(0,z;t)}{\eta}\Big)q_{0}(z)\partial_{3}\xi(0,z;t)\dd z  -\tfrac{1}{\eta}q_{0}(\xi(t,x;0))\partial_{1}\xi(t,x;0)\\
        &=-\tfrac{1}{\eta^{2}}\int_{x}^{\infty}\exp\Big(\tfrac{x-y}{\eta}\Big)q_{0}(\xi(t,y;0))\partial_{3}\xi(0,\xi(t,y;0)\partial_{2}\xi(t,y;0)\dd y\\
    &\qquad +\tfrac{1}{\eta}q_{0}(\xi(t,x;0))\partial_{2}\xi(t,x;0)V(W_{\eta}[q_{\eta}](t,x))\\
    &=-\tfrac{1}{\eta^{2}}\int_{x}^{\infty}\exp\Big(\tfrac{x-y}{\eta}\Big)q_{\eta}(t,y)V(W_{\eta}[q_{\eta}](t,y))\dd y + \tfrac{1}{\eta}q_{\eta}(t,x)V(W_{\eta}[q_{\eta}](t,x)).
\end{align*}
This expression is essentially bounded for \(\eta\in\R_{>0}\) so that we obtain the differentiability. Next, we show that the nonlocal operator indeed satisfies the Cauchy problem in \crefrange{eq:W}{eq:initial_W}.
Using the identity compute for \(W_{t}\) above we have for the left hand side of \cref{eq:W} and \((t,x)\in\OT\)
\begin{align*}
    &\partial_{t}W_{\eta}[q_{\eta}](t,x)+ V(W_{\eta}[q_{\eta}](t,x))\partial_{x}W_{\eta}[q_{\eta}](t,x)\\
    &=\tfrac{1}{\eta}q_{\eta}(t,x)V(W_{\eta}[q_{\eta}](t,x))
    -\tfrac{1}{\eta^{2}}\int_{x}^{\infty}\exp\big(\tfrac{x-y}{\eta}\big)q_{\eta}(t,y)V(W_{\eta}[q_{\eta}](t,y))\dd y\\
    &\qquad +V(W_{\eta}[q_{\eta}](t,x))\big(\tfrac{1}{\eta}W_{\eta}[q_{\eta}](t,x)-\tfrac{1}{\eta}q_{\eta}(t,x)\big)\\
    &=V(W_{\eta}[q_{\eta}](t,x))\tfrac{1}{\eta}W_{\eta}[q_{\eta}](t,x)\\& \qquad -\tfrac{1}{\eta^{2}}\int_{x}^{\infty}\exp\big(\tfrac{x-y}{\eta}\big)\big( W_{\eta}[q_{\eta}]t,y)-\eta\partial_{y} W_{\eta}[q_{\eta}](t,y))V(W_{\eta}[q_{\eta}](t,y)\big)\dd y\\
    &=V(W_{\eta}[q_{\eta}](t,x))\tfrac{1}{\eta}W_{\eta}[q_{\eta}](t,x)\\& \qquad -\tfrac{1}{\eta^{2}}\int_{x}^{\infty}\exp\big(\tfrac{x-y}{\eta}\big) W_{\eta}[q_{\eta}]t,y)V(W_{\eta}[q_{\eta}](t,y)\big)\dd y\\
    &\qquad+\tfrac{1}{\eta}\int_{x}^{\infty}\exp\big(\tfrac{x-y}{\eta}\big)\partial_{y} W_{\eta}[q_{\eta}](t,y)V(W_{\eta}[q_{\eta}](t,y)\big)\dd y\\
    &=-\tfrac{1}{\eta}\int_{x}^{\infty}\exp\big(\tfrac{x-y}{\eta}\big)V'(W_{\eta}[q_{\eta}](t,x))\partial_{y}W_{\eta}[q_{\eta}](t,y)W_{\eta}[q_{\eta}](t,y)\dd y
\end{align*}
where we have used two times the identity in \cref{eq:WWxq} and integration by parts. However, the last term is indeed the right hand side of \cref{eq:W}. The nonlocal term \(W_{\eta}\) also satisfies the initial datum in \cref{eq:initial_W} which is a direct consequence of the definition of \(W_{\eta}\) in \cref{defi:W_nonlocal} when plugging in \(t=0\) (this is possible as the solution is regular enough, i.e., \(q_{\eta}\in C\big([0,T];L^{1}_{\text{loc}}(\R)\big)\).
\end{proof}

\begin{remark}[Fully local equation in \(W_\eta\)]\label{rem:localW}
The transport equation in \(W_\eta\) in \cref{eq:W} with nonlocal source can also be transformed into a fully local equation (as in  \cite{CDKP2020}): 
\begin{align*}
 \partial_{t} W_\eta(t,x) + \partial_{x} \big(V(W_\eta(t,x))W_{\eta}(t,x)\big) &= \eta \partial_{tx}^{2} W_\eta(t,x) +\partial_{x} \big(V(W_\eta(t,x))\partial_{x} W_\eta(t,x)\big) && (t,x) \in \OT \\
W_\eta(0,x) &= \tfrac{1}{\eta}\int_x^\infty \exp(\tfrac{x-y}{\eta})q_{0}(y)\dd y && x \in  \R.
\end{align*}
\end{remark}

For our main theorem \cref{theo:total_variation_bound} where we prove a total variation bound on \(W_{\eta}\) uniform in \(\eta\) we require a density or stability result which enables us to smooth the solution. This result, stated below, is borrowed from \cite[Theorem 4.17]{MR3944408}. 

\begin{theorem}[Stability of the nonlocal conservation law w.r.t.\ the initial datum]\label{theo:stability}
Let \cref{ass:input_datum} hold, and  let $\mC_1,\mC_2 \in \R_{\geq0}$ be given such that \[Q(\mC_1,\mC_2) \: \big\{ u \in TV_{\text{loc}}(\R) \, : \, \|u\|_{L^\infty(\R)} \leq \mC_1 \wedge |u|_{TV(\R)} \leq \mC_2\big\}.\]
Let $q_0 \in Q(\mC_1,\mC_2)$ be given and denote by $q$ the solutions to the corresponding nonlocal conservation law.

Then, the solutions to the corresponding nonlocal conservation laws (denoted by \(q\)) satisfy the following \(C([0,T];L^{1}(\R))\) stability estimate, i.e.\ 
\begin{align*}
    \forall \epsilon\in\R_{>0}\ \exists \delta\in\R_{>0} : \forall\tilde{q}_{0} \in Q(\mC_1,\mC_2) \text{ with }\ \|q_{0}-\tilde{q}_{0}\|_{L^{1}(\R)}\leq \delta \ \implies \ \|q-\tilde{q}\|_{C([0,T];L^{1}(\R))}\leq \epsilon
\end{align*}
where $\tilde q$ is the solution to the corresponding nonlocal conservation law with initial datum $\tilde q_0$.
\end{theorem}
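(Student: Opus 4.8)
The plan is to reduce the statement to a Gronwall inequality for \(m(t):=\|q(t,\cdot)-\tilde q(t,\cdot)\|_{L^1(\R)}\), where \(q,\tilde q\) are the (unique, by \cref{theo:existence_uniqueness}) weak solutions with data \(q_0,\tilde q_0\in Q(\mC_1,\mC_2)\); note \(q_0-\tilde q_0\in L^1(\R)\) is implicit in the statement and \(\eta\in\R_{>0}\) is fixed throughout (constants below may depend on it). First I would collect the uniform bounds available from \cref{theo:existence_uniqueness}: \(\|q\|_{L^\infty(\OT)},\|\tilde q\|_{L^\infty(\OT)}\le\mC_1\), and \(\esssup_{t\in(0,T)}|q(t,\cdot)|_{TV(\R)},\esssup_{t\in(0,T)}|\tilde q(t,\cdot)|_{TV(\R)}\le\mC_2'\) for a constant \(\mC_2'=\mC_2'(T,\eta,\mC_1,\mC_2)\) (the \(TV\) bound in \cite{MR3670045} is of the form \(|q_0|_{TV(\R)}\e^{Ct}\)); and, from \cref{lem:transport_nonlocal_rhs}, that the transporting fields \(b:=V(W_\eta[q])\), \(\tilde b:=V(W_\eta[\tilde q])\) lie in \(W^{1,\infty}(\OT)\) with norms controlled by \(\eta\), \(\mC_1\) and \(\|V'\|_{L^\infty}\) on the interval of \cref{ass:input_datum}. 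In particular \(q,\tilde q\) are the entropy solutions of the linear conservation laws with these Lipschitz coefficients. It is convenient to reduce to compactly supported data (preserving the class \(Q(\mC_1,\mC_2)\)) so that all quantities below are genuinely integrable and boundary terms at infinity vanish; the general case follows by approximation.

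Next I would write \cref{eq:nonlocal_conservation_law} for \(w:=q-\tilde q\) in non-conservative form, \(\partial_tw+b\,\partial_xw+(\partial_xb)\,w=-\partial_x\big((b-\tilde b)\tilde q\big)\) on \(\OT\) with \(w(0,\cdot)=q_0-\tilde q_0\), and run the Kruzhkov/DiPerna--Lions renormalization: since \(b\in W^{1,\infty}(\OT)\) and \(w\) is \(BV\) in \(x\), multiplying by \(\sgn{w}\) and integrating over \(\R\) makes the transport part \(b\,\partial_x|w|+(\partial_xb)|w|=\partial_x(b|w|)\) integrate to zero, leaving
\[
\tfrac{\d}{\d t}m(t)\le\big|(b-\tilde b)(t,\cdot)\,\tilde q(t,\cdot)\big|_{TV(\R)}\le\|\tilde q(t,\cdot)\|_{L^\infty}\,|(b-\tilde b)(t,\cdot)|_{TV(\R)}+\|(b-\tilde b)(t,\cdot)\|_{L^\infty}\,|\tilde q(t,\cdot)|_{TV(\R)}.
\]
The remaining task is to bound the velocity mismatch by \(m\). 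Young's inequality for the convolution in \cref{defi:W_nonlocal} gives \(\|W_\eta[q](t,\cdot)-W_\eta[\tilde q](t,\cdot)\|_{L^\infty(\R)}\le\tfrac1\eta m(t)\), and, using \cref{eq:WWxq} and the linearity of \(W_\eta\), \(\partial_x(W_\eta[q]-W_\eta[\tilde q])=\tfrac1\eta(W_\eta[w]-w)\), hence \(|W_\eta[q](t,\cdot)-W_\eta[\tilde q](t,\cdot)|_{TV(\R)}\le\tfrac2\eta m(t)\). Composing with \(V\) --- the local Lipschitz bound gives \(\|(b-\tilde b)(t,\cdot)\|_{L^\infty}\le\tfrac{\|V'\|_\infty}{\eta}m(t)\), while the splitting \(\partial_x(V(W_\eta[q])-V(W_\eta[\tilde q]))=V'(W_\eta[q])\,\partial_x(W_\eta[q]-W_\eta[\tilde q])+(V'(W_\eta[q])-V'(W_\eta[\tilde q]))\,\partial_xW_\eta[\tilde q]\), together with \(\partial_xW_\eta[\tilde q]=\tfrac1\eta(W_\eta[\tilde q]-\tilde q)\in L^1\) and a continuity modulus \(\omega\) of \(V'\), gives \(|(b-\tilde b)(t,\cdot)|_{TV(\R)}\le C_\eta\,m(t)+\omega(m(t))\) with \(\omega(r)\to0\) as \(r\to0\) --- and inserting these together with \(\|\tilde q\|_{L^\infty}\le\mC_1\), \(|\tilde q(t,\cdot)|_{TV(\R)}\le\mC_2'\) yields
\[
\tfrac{\d}{\d t}m(t)\le C\,m(t)+C\,\omega(m(t)),\qquad C=C(T,\eta,\mC_1,\mC_2,\|V'\|_{L^\infty}),\qquad m(0)=\|q_0-\tilde q_0\|_{L^1(\R)}.
\]
A Gronwall (respectively Osgood) argument on \([0,T]\), iterated over finitely many short subintervals if one prefers not to track the size of \(C\), then gives \(\sup_{t\in[0,T]}m(t)\to0\) as \(\|q_0-\tilde q_0\|_{L^1(\R)}\to0\); for a given \(\epsilon\) one picks \(\delta\) so that this supremum stays \(\le\epsilon\), which is the claim. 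To make the renormalization rigorous at the level of weak (rather than classical) solutions one first mollifies \(q_0,\tilde q_0\) within \(Q(\mC_1,\mC_2)\) and passes to the limit using the same estimate, whose constants do not see the mollification --- this is essentially the argument of \cite[Theorem 4.17]{MR3944408}.

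The step I expect to be the main obstacle is the \(TV\) estimate on the velocity mismatch, \(|V(W_\eta[q])-V(W_\eta[\tilde q])|_{TV(\R)}\lesssim m\): differentiating naively and substituting \cref{eq:WWxq} reintroduces \(q-\tilde q\) \emph{pointwise}, which \(\|q-\tilde q\|_{L^1}\) does not control. The way around it is to differentiate the composition so that only the derivative of the inner argument, \(\partial_x(W_\eta[q]-W_\eta[\tilde q])=\tfrac1\eta(W_\eta[w]-w)\), enters through its \(L^1\) norm (bounded by \(\tfrac2\eta m(t)\)), while the leftover term carries a modulus of continuity of \(V'\) evaluated at the \emph{small} quantity \(\|W_\eta[q]-W_\eta[\tilde q]\|_{L^\infty}\le\tfrac1\eta m(t)\) and multiplied by the \(L^1\)-bounded \(\partial_xW_\eta[\tilde q]\); if one is uneasy about linearizing \(V\) under \cref{ass:input_datum} alone, the alternative --- as in \cite{MR3944408} --- is to carry out the \(L^1\)-stability estimate by a doubling of variables with Kruzhkov entropies \(|q-k|\) rather than by differencing the non-conservative equations. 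Since \(\eta\) is fixed here, the \(\eta\)-dependence of all constants is harmless.
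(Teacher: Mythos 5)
The first thing to note is that the paper does not actually prove \cref{theo:stability}: its ``proof'' is a citation of \cite[Theorem 4.17]{MR3944408} together with the remark that passing from compactly supported kernels to the exponential kernel requires only minor changes. So your self-contained Gronwall/renormalization argument is by construction a different route from the paper's. Several of your individual steps are correct and make good use of the exponential-kernel identity \cref{eq:WWxq}: the bounds \(\|W_\eta[w]\|_{L^\infty}\le \eta^{-1}m(t)\) and \(|W_\eta[w]|_{TV}\le 2\eta^{-1}m(t)\) for \(w=q-\tilde q\) are right, and the reduction of the problem to controlling \(\big|(b-\tilde b)\tilde q\big|_{TV}\) is the correct bookkeeping for an \(L^1\)-contraction argument on the linearized equations.

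However, there is a genuine gap exactly at the step you yourself single out as the main obstacle, and neither of your proposed fixes closes it. Under \cref{ass:input_datum} the velocity satisfies only \(V\in W^{1,\infty}_{\loc}(\R)\), so \(V'\) is merely bounded and measurable and need not admit any modulus of continuity \(\omega\); the term \(\big(V'(W_\eta[q])-V'(W_\eta[\tilde q])\big)\partial_x W_\eta[\tilde q]\) can therefore have \(L^1\)-norm of order \(\|V'\|_{L^\infty}\,|\tilde q|_{TV}\) even when \(\|W_\eta[q]-W_\eta[\tilde q]\|_{L^\infty}\) is arbitrarily small (take \(V'\) with a jump at a value on which \(W_\eta[\tilde q]\) is constant on a set of positive measure). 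The fallback you mention --- Kruzhkov doubling for the two linear transport equations --- runs into the same wall, since any such estimate again requires \(W^{1,1}\)- or \(TV\)-control of the coefficient difference \(b-\tilde b\) in terms of \(m(t)\). The mechanism that actually yields the (deliberately non-Lipschitz, \(\epsilon\)--\(\delta\)) statement of \cref{theo:stability} in the cited reference is different: one works with the characteristics/fixed-point representation \(q(t,\cdot)=q_0(\xi(t,\cdot\,;0))\,\partial_2\xi(t,\cdot\,;0)\), shows that the nonlocal term and hence the flow maps depend continuously on \(q_0\) in \(L^1\), and then converts closeness of the flows into \(L^1\)-closeness of the transported densities via the estimate \(\|q_0\circ\xi-q_0\circ\tilde\xi\|_{L^1}\lesssim |q_0|_{TV}\,\|\xi-\tilde\xi\|_{L^\infty}\). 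In other words, the required modulus of continuity is supplied by the \(TV\)-bound on the \emph{initial datum} (which is precisely why the class \(Q(\mC_1,\mC_2)\) appears in the statement), not by smoothness of \(V'\). Your argument would become correct if you either strengthened the hypothesis to \(V\in C^{1}\) with uniformly continuous \(V'\) on the relevant compact range, or replaced the \(TV\)-estimate on \(b-\tilde b\) by the composition-with-the-flow estimate above. A smaller point: ``reducing to compactly supported data'' is not harmless here, since elements of \(Q(\mC_1,\mC_2)\) need not be in \(L^1(\R)\) and truncating them is itself an \(L^1\)-perturbation of the kind the theorem is about; it is cleaner to work directly with \(w\in L^1\cap BV\) and use its decay at infinity to kill the boundary terms.
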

\begin{proof}
Almost the required result can be found in \cite[Theorem 4.17]{MR3944408} with the difference that the kernel of the nonlocal operator is supposed to be finite while here we have the exponential kernel (\cref{defi:W_nonlocal}). However, the changes for this result are minor, we do not go into details.
\end{proof}

The next theorem shows that the nonlocal term has a total variation which cannot increase over time.
\begin{theorem}[Total variation bound in the spatial component of \(W\) -- uniformly in \(\eta\)]\label{theo:total_variation_bound}
    The nonlocal term \(W_\eta\) defined in \cref{defi:W_nonlocal} but which also satisfies the identity demonstrated in \cref{lem:transport_nonlocal_rhs} satisfies -- uniformly in \(\eta\) a total variation bound, i.e.
    \[
    |W_{\eta}(t,\cdot)|_{TV(\R)}\leq |W_{\eta}(0,\cdot)|_{TV(\R)}\leq |q_{0}|_{TV(\R)}\ \forall  \eta\in\R_{>0}.
    \]
\end{theorem}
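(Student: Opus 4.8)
The plan is to establish the time‑monotonicity $\|\partial_x W_\eta(t,\cdot)\|_{L^1(\R)}\le\|\partial_x W_\eta(s,\cdot)\|_{L^1(\R)}$ for $0\le s\le t\le T$ at the level of \emph{classical} solutions, and then to recover the general case by approximation. For the reduction I would mollify the data: replace $q_0$ by $q_0*\rho_\delta$ (which neither increases $\|q_0\|_{L^\infty(\R)}$ nor $|q_0|_{TV(\R)}$ and preserves non‑negativity) and, since \cref{ass:input_datum} only gives $V\in W^{1,\infty}_{\loc}(\R)$, also mollify $V$ on a neighbourhood of the invariant interval $\big[\essinf_\R q_0,\ \|q_0\|_{L^\infty(\R)}\big]$ furnished by the maximum principle \cref{eq:uniform_bound}, keeping $V'\le 0$ there. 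The resulting solutions $W_\eta^\delta$ are classical; by \cref{theo:stability} (and its analogue with respect to $V$), together with $\eta\partial_x W_\eta=W_\eta-q_\eta$ from \cref{eq:WWxq}, one gets $W_\eta^\delta\to W_\eta$ in $C([0,T];L^1_\loc(\R))$, so lower semicontinuity of the total variation transfers the bound from $W_\eta^\delta$ to $W_\eta$, and since $q_0*\rho_\delta\to q_0$ strictly in $BV$ (hence $W_\eta^\delta(0,\cdot)=W_\eta(0,\cdot)*\rho_\delta\to W_\eta(0,\cdot)$ strictly in $BV$) no constant is lost.

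For a classical solution set $w\:\partial_x W_\eta$. Differentiating the transport equation \cref{eq:W} in $x$ gives
\begin{equation*}
\partial_t w+V(W_\eta)\partial_x w=-V'(W_\eta)\,w^2+\partial_x S,
\end{equation*}
where $S$ denotes the nonlocal right‑hand side of \cref{eq:W}. Multiplying by $\sgn{w}$, integrating over $\R$, and integrating the transport term by parts (the boundary contributions at $\pm\infty$ vanish since $w(t,\cdot)\in L^1(\R)$ and decays at infinity for compactly supported smooth data), the term $-\int_\R V(W_\eta)\partial_x|w|\dd x=\int_\R V'(W_\eta)\,w\,|w|\dd x$ cancels exactly against $-\int_\R\sgn{w}\,V'(W_\eta)\,w^2\dd x$, leaving
\begin{equation*}
\tfrac{\d}{\d t}\|w(t,\cdot)\|_{L^1(\R)}=\int_\R\sgn{w(t,x)}\,\partial_x S(t,x)\dd x .
\end{equation*}

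The heart of the argument is to show this last quantity is non‑positive. Differentiating the exponential convolution defining $S$ gives the pointwise identity $\partial_x S=\tfrac1\eta\big(S+V'(W_\eta)\,w\,W_\eta\big)$; inserting this, the $V'(W_\eta)wW_\eta$ part contributes $\tfrac1\eta\int_\R V'(W_\eta)W_\eta|w|\dd x$, while in the $S$-part I substitute the definition of $S$ and exchange the order of integration (Fubini), obtaining $-\tfrac1\eta\int_\R V'(W_\eta)\,w\,W_\eta\,\Phi\dd y$ with $\Phi(y)\:\tfrac1\eta\int_{-\infty}^y\sgn{w(x)}\exp\!\big(\tfrac{x-y}{\eta}\big)\dd x$, which satisfies $|\Phi|\le\tfrac1\eta\int_{-\infty}^y\exp\!\big(\tfrac{x-y}{\eta}\big)\dd x=1$. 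Hence
\begin{equation*}
\int_\R\sgn{w}\,\partial_x S\dd x=\tfrac1\eta\int_\R V'(W_\eta)\,W_\eta\big(|w|-w\,\Phi\big)\dd y\le 0,
\end{equation*}
since $V'(W_\eta)\le 0$ on the invariant range, $W_\eta\ge 0$ (being an average of the non‑negative $q_\eta$), and $|w|-w\Phi\ge|w|\big(1-|\Phi|\big)\ge 0$. This shows $t\mapsto|W_\eta(t,\cdot)|_{TV(\R)}=\|w(t,\cdot)\|_{L^1(\R)}$ is non‑increasing, giving the first inequality. For the second, the substitution $y=x+\eta s$ in \cref{defi:W_nonlocal} at $t=0$ gives $W_\eta(0,x)=\int_0^\infty\e^{-s}q_0(x+\eta s)\dd s$, an average of translates of $q_0$, so by translation invariance of the $TV$-seminorm and Minkowski's inequality $|W_\eta(0,\cdot)|_{TV(\R)}\le\int_0^\infty\e^{-s}|q_0|_{TV(\R)}\dd s=|q_0|_{TV(\R)}$.

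I expect the main obstacle to be not the algebra but the regularization step: \cref{lem:transport_nonlocal_rhs} only provides $W_\eta\in W^{1,\infty}(\OT)$, which does not license differentiating \cref{eq:W} in $x$, so the computation above must genuinely be run on smooth approximations, with the passage to the limit controlled by \cref{theo:stability}, lower semicontinuity of the total variation, and strict $BV$-convergence of the mollified data; one must also verify that the $\pm\infty$ boundary terms in the integrations by parts vanish, which follows from finite propagation speed together with the exponential decay at infinity inherited from a compactly supported smooth datum.
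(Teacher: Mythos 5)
Your proposal is correct and follows essentially the same route as the paper: mollify the initial datum, use the stability result to reduce to smooth solutions, differentiate the transport equation \cref{eq:W} in $x$, multiply by $\sgn{\partial_x W_\eta}$ and integrate so that the transport and quadratic $V'$ terms cancel, and then control the remaining source contributions via Fubini and the bound $\bigl|\tfrac1\eta\int_{-\infty}^y \sgn{\partial_x W_\eta(t,x)}\exp(\tfrac{x-y}{\eta})\dd x\bigr|\le 1$ together with $V'\le 0$ and $W_\eta\ge 0$. Your treatment of the initial-datum inequality by writing $W_\eta(0,\cdot)$ as an exponential average of translates and invoking Minkowski is a slightly cleaner variant of the paper's duality computation, and your explicit mollification of $V$ and appeal to lower semicontinuity of the total variation are careful additions, but the substance of the argument is the same.
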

\begin{proof}
We take advantage of the stability result in \cref{theo:stability} which tells us that when smoothing \(q_{0}\) by \(q_{0}^{\eps}\equivd q_{0}\ast \phi_{\eps}\) with \(\phi_{\eps}\) being a standard mollifier \cite[C.4 Mollifiers]{leoni} with smoothing parameter \(\eps \in\R_{>0}\), the corresponding solution \(q_{\eta}^{\epsilon}\) will be close in the \(C([0,T];L^{1}(\R))\) topology. Additionally, as the initial datum is smooth, so is the corresponding solution (see \cite[Corollary 5.3]{MR3670045}) which we will denote by \(q_{\eta}^{\eps}\). We now prove the total variation bound. As the solution is smooth, the total variation coincides with the \(L^{1}\)-norm of the derivative and we obtain for \(t\in[0,T]\), which can be estimated as follows.
\begin{align}
    \tfrac{\d}{\d t}\int_{\R} |\partial_{x} W^\eps_\eta(t,x)|\dd x 
    &=\int_{\R} \sgn{\partial_{x} W^\eps_\eta(t,x)}\partial_{tx}^2 W^{\eps}_{\eta}(t,x)\dd x\notag\\
    &=-\int_{\R} \sgn{\partial_{x} W^\eps_\eta(t,x)}V(W^{\eps}_\eta(t,x))\partial^2_{xx} W^{\eps}(t,x)\dd x \notag\\ 
    &\qquad -\int_{\R} \sgn{\partial_{x} W^\eps_\eta(t,x)}V'(W^{\eps}_\eta(t,x))\big(\partial_{x} W^\eps_\eta(t,x)\big)^{2}\dd x\notag\\
    &\qquad +\tfrac{1}{\eta}\int_{\R} \sgn{\partial_{x} W^\eps_\eta(t,x)}V'(W^{\eps}_\eta(t,x))W^{\eps}_\eta(t,x)\partial_{x} W^\eps_\eta(t,x)\dd x\notag\\
    &\qquad -\tfrac{1}{\eta^{2}}\int_{\R} \sgn{\partial_{x} W^\eps_\eta(t,x)}\int_{x}^{\infty}\exp(\tfrac{x-y}{\eta})V'(W^{\eps}_\eta(t,y))\partial_{y} W^\eps_\eta(t,y)W^{\eps}_\eta(t,y)\dd y\dd x\notag\\
    &=\int_{\R} 2\delta_{0}(\partial_{x} W^\eps_\eta(t,x))V(W^{\eps}_\eta(t,x))\partial_{x} W^\eps_\eta(t,x)\partial^2_{xx} W^{\eps}_{\eta}(t,x)\dd x \notag \\ & \qquad +\int_{\R} \sgn{\partial_{x} W^\eps_\eta(t,x)}V'(W^{\eps})\big(\partial_{x} W^\eps_\eta(t,x)\big)^{2}\dd x\notag\\
    &\qquad -\int_{\R} \sgn{\partial_{x} W^\eps_\eta(t,x)}V'(W^{\eps}_\eta(t,x))\big(\partial_{x} W^\eps_\eta(t,x)\big)^{2}\dd x\notag\\ 
    & \qquad  +\tfrac{1}{\eta}\int_{\R} \sgn{\partial_{x} W^\eps_\eta(t,x)}V'(W^{\eps}_\eta(t,x))W^{\eps}_\eta(t,x)\partial_{x} W^\eps_\eta(t,x)\dd x\notag\\
    &\qquad -\tfrac{1}{\eta^{2}}\int_{\R} \sgn{\partial_{x} W^\eps_\eta(t,x)}\int_{x}^{\infty}\exp(\tfrac{x-y}{\eta})V'(W^{\eps}_\eta(t,y))\partial_{y} W^\eps_\eta(t,y)W^{\eps}_\eta(t,y)\dd y\dd x\notag\\
    &\qquad+\tfrac{1}{\eta}\int_{\R} |\partial_{x} W^\eps_\eta(t,x)|V'(W^{\eps}_\eta(t,x))W^{\eps}_\eta(t,x)\dd x \notag
    \\
    &\qquad -\tfrac{1}{\eta^{2}}\int_{\R} V'(W^{\eps}_\eta(t,y)\partial_{y} W^\eps_\eta(t,y)W^{\eps}_\eta(t,y) \int_{-\infty}^{y}\sgn{\partial_{x} W^\eps_\eta(t,x)}\exp(\tfrac{x-y}{\eta})\dd x\dd y\notag\\
    &\leq\tfrac{1}{\eta}\int_{\R} |\partial_{x} W^\eps_\eta(t,x)|V'(W^{\eps}_\eta(t,x))W^{\eps}_\eta(t,x)\dd x \notag\\ 
    & \qquad -\tfrac{1}{\eta^{2}}\int_{\R}V'(W^{\eps}_\eta(t,y))|\partial_{y} W^\eps_\eta(t,y)|W^{\eps}_\eta(t,y) \int_{-\infty}^{y}\exp(\tfrac{x-y}{\eta})\dd x\dd y\label{eq:42}\\
    &=\tfrac{1}{\eta}\int_{\R} |\partial_{x} W^\eps_\eta(t,x)|V'(W^{\eps}_\eta(t,x))W^{\eps}_\eta(t,x)\dd x \notag\\ 
    &\qquad -\tfrac{1}{\eta}\int_{\R}V'(W^{\eps}_\eta(t,y))|\partial_{y} W^\eps_\eta(t,y)|W^{\eps}_\eta(t,y) \exp(\tfrac{y-y}{\eta})\dd y\notag\\
    & = 0.\notag
\end{align}

We thus obtain 
\begin{align*}
    |W_{\eta}^{\epsilon}(t,\cdot)|_{TV(\R)}\leq |W_{\eta}^{\epsilon}(0,\cdot)|_{TV(\R)} \leq |q_{0}|_{TV(\R)},
\end{align*}
where the last inequality follows from the assumption on \(q_{0}\in TV(\R)\) as stated in \cref{ass:input_datum} and the definition of the initial value for \(W_\eta\) as in \cref{eq:initial_W}:
\begin{align*}
    \|W_{\eta}^{\eps}(0,\cdot)\|_{TV(\R)}&=\!\!\!\sup_{\substack{\psi\in C^{1}_{\text{c}}(\R):\\ \|\psi\|_{L^{\infty}(\R)}\leq 1}}\!\!\!\int_{\R}\psi'(x)W_{\eta}^\eps[q_{0}^{\eps}](x)\dd x =\!\!\!\sup_{\substack{\psi\in C^{1}_{\text{c}}(\R):\\
    \|\psi\|_{L^{\infty}(\R)}\leq 1}}\!\!\! \int_{\R}\psi'(x)\tfrac{1}{\eta}\int_{\R_{>0}}\exp(\tfrac{x-y}{\eta})q_{0}^\eps(y)\dd y\dd x \\
    &=\!\!\!\sup_{\substack{\psi\in C^{1}_{\text{c}}(\R):\\ \|\psi\|_{L^{\infty}(\R)}\leq 1}}\!\!\!\int_{\R}\psi'(x)\tfrac{1}{\eta}\int_{\R_{<0}}\exp(\tfrac{z}{\eta})q_{0}^\eps(x-z)\dd y\dd x 
    =\!\!\!\sup_{\substack{\psi\in C^{1}_{\text{c}}(\R):\\ \|\psi\|_{L^{\infty}(\R)}\leq 1}}\sup_{z\in \R_{<0}}\int_{\R}\psi'(x+z)q_{0}^\eps(x)\dd y\\ 
    &=\!\!\!\sup_{\substack{\psi\in C^{1}_{\text{c}}(\R):\\ \|\psi\|_{L^{\infty}(\R)}\leq 1}}\!\!\!\int_{\R}\psi'(x)\int_{\R} \phi_\eps(x-y)q_{0}(x) \dd x\dd y
    \leq\sup_{y\in\R}\int_{\R}\phi_\eps(x-y)\!\! \!\!\!\sup_{\substack{\psi\in C^{1}_{\text{c}}(\R):\\ \|\psi\|_{L^{\infty}(\R)}\leq 1}}\int_{\R} \psi'(z)q_{0}(z)\dd z \dd x\\       
    &\leq |q_{0}|_{TV(\R)}.
\end{align*}
\end{proof}
\begin{remark}[Total variation bound and the required assumptions on the velocity \(V\)]
\label{rem:TV_velocity}
The key step in the proof of the  total variation bound stated in  \cref{theo:total_variation_bound} can be located in the estimate around \cref{eq:42}. Reconnecting to \cref{rem:velocity_generalization} it is enough to assume the velocity to satisfy \(V'(s)s\leq 0\ \forall s\in\R\) to obtain the uniform total variation bound without any sign restriction on the initial datum.
\end{remark}

\section{Compactness argument and proof of the convergence result}
\label{sec:convergence}

\begin{theorem}[Compactness of \(W_{\eta}\) in {\(C\big([0,T];L^{1}_{\text{loc}}(\R)\big)\)}]\label{theo:compactness}
The set \(\big(W_{\eta}\big)_{\eta\in\R_{>0}}\subseteq C\big([0,T];L^{1}_{\text{loc}}(\R)\big)\) of solutions to \crefrange{eq:W}{eq:initial_W} is compactly embedded into \(C\big([0,T];L^{1}_{\loc}(\R)\big),\) i.e.\ 
\begin{align*}
    \Big\{W_{\eta}\in C\big([0,T];L^{1}_{\loc}(\R)\big):\ W_{\eta} \text{ satisfies \crefrange{eq:W}{eq:initial_W}},\ \eta\in\R_{>0}\Big\}\overset{\text{c}}{\hookrightarrow} C\big([0,T];L^{1}_{\loc}(\R)\big).
\end{align*}
\end{theorem}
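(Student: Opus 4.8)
The plan is to apply a vector-valued Arzelà–Ascoli (Aubin–Lions-type) argument: to obtain compactness in $C([0,T];L^1_{\loc}(\R))$ it suffices to show (i) for each fixed $t$, the family $\{W_\eta(t,\cdot)\}_{\eta}$ is precompact in $L^1_{\loc}(\R)$, and (ii) the family is equicontinuous in $t$ with values in $L^1_{\loc}(\R)$, uniformly in $\eta$. For (i), the spatial total variation bound from \cref{theo:total_variation_bound} gives $|W_\eta(t,\cdot)|_{TV(\R)}\leq |q_0|_{TV(\R)}$ uniformly in $\eta$ and $t$, and the maximum principle \cref{eq:uniform_bound} together with the definition \cref{defi:W_nonlocal} gives a uniform $L^\infty$ bound $\essinf q_0 \leq W_\eta(t,x)\leq \|q_0\|_{L^\infty}$; hence on every bounded interval $[-R,R]$ the family is bounded in $BV$ and uniformly bounded, so by the compact embedding $BV([-R,R])\hookrightarrow\hookrightarrow L^1([-R,R])$ it is precompact in $L^1_{\loc}(\R)$.

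For the temporal equicontinuity (ii), I would use the transport equation \cref{eq:W} to estimate $\|W_\eta(t,\cdot)-W_\eta(s,\cdot)\|_{L^1([-R,R])}$ for $0\leq s\leq t\leq T$. Writing the equation in the form $\partial_t W_\eta = -V(W_\eta)\partial_x W_\eta + (\text{nonlocal source})$, I bound $\partial_t W_\eta$ in an appropriate weak norm. The term $V(W_\eta)\partial_x W_\eta$ is controlled because $\|V(W_\eta(t,\cdot))\|_{L^\infty}$ is bounded (as $W_\eta$ ranges in a fixed compact set and $V\in W^{1,\infty}_{\loc}$) and $\partial_x W_\eta(t,\cdot)$ is a measure of mass $\leq |q_0|_{TV}$; the nonlocal source $-\tfrac1\eta\int_x^\infty \exp(\tfrac{x-y}{\eta})V'(W_\eta)\partial_y W_\eta\, W_\eta\,\d y$ is, after integrating in $x$ over $\R$ and using Fubini, bounded in $L^1(\R)$ by $\|V'\|_{L^\infty}\|W_\eta\|_{L^\infty}|q_0|_{TV}$ uniformly in $\eta$ (the $\tfrac1\eta$ is absorbed by $\int_{-\infty}^y \tfrac1\eta\exp(\tfrac{x-y}{\eta})\d x = 1$). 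Thus $\partial_t W_\eta$ is bounded in $L^1_{\loc}$, uniformly in $\eta$ and $t$ — indeed one already sees from the proof of \cref{lem:transport_nonlocal_rhs} that $\partial_t W_\eta = \tfrac1\eta q_\eta V(W_\eta) - \tfrac1{\eta^2}\int_x^\infty \exp(\tfrac{x-y}{\eta})q_\eta V(W_\eta)\,\d y$, whose $L^1_{\loc,x}$-norm is controlled uniformly. Integrating in time gives $\|W_\eta(t,\cdot)-W_\eta(s,\cdot)\|_{L^1([-R,R])}\leq C_R |t-s|$, i.e.\ uniform Lipschitz-in-time equicontinuity.

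With (i) and (ii) in hand, the Arzelà–Ascoli theorem for functions $[0,T]\to L^1_{\loc}(\R)$ (a Fréchet space, metrizable via the countable family of seminorms $\|\cdot\|_{L^1([-n,n])}$) yields that $\{W_\eta\}_\eta$ is relatively compact in $C([0,T];L^1_{\loc}(\R))$, which is exactly the claimed compact embedding. To be careful one works on each fixed window $[-R,R]$, extracts a subsequence converging in $C([0,T];L^1([-R,R]))$, and then uses a diagonal argument over $R=n\in\N$ to get a single subsequence converging in $C([0,T];L^1_{\loc}(\R))$.

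The main obstacle is making the temporal estimate genuinely uniform in $\eta$ despite the explicit $\tfrac1\eta$ and $\tfrac1{\eta^2}$ factors in the source term and in the formula for $\partial_t W_\eta$: one must be disciplined about integrating in $x$ \emph{before} estimating, so that the normalization $\int_{-\infty}^{y}\tfrac1\eta\exp(\tfrac{x-y}{\eta})\,\d x=1$ (equivalently $\tfrac1\eta\int_x^\infty\exp(\tfrac{x-y}{\eta})\,\d y=1$) kills the singular prefactor; pointwise-in-$x$ bounds would blow up. A secondary technical point is that $L^1_{\loc}(\R)$ is not a Banach space, so one should phrase Arzelà–Ascoli in the metrizable-Fréchet-space setting (or simply argue window-by-window and diagonalize, as above), and one should note that the limiting function automatically satisfies the same $TV$ and $L^\infty$ bounds and the same equation in the sense made precise in the subsequent results.
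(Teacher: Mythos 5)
Your argument is correct and follows essentially the same route as the paper's proof: an Ascoli-type criterion in $C([0,T];L^1_{\loc}(\R))$, with pointwise-in-$t$ compactness supplied by the uniform $TV$ bound of \cref{theo:total_variation_bound} and temporal equicontinuity obtained from the transport equation \cref{eq:W}, absorbing the $\tfrac1\eta$ prefactor of the nonlocal source by integrating the exponential kernel in $x$ (the paper phrases this via an integration by parts and works with the mollified datum $q_0^\eps$, but the resulting bound $\big(\|V\|_{L^\infty}+\|V'\|_{L^\infty}\|q_0\|_{L^\infty}\big)|q_0|_{TV}|t_1-t_2|$ is identical). Your parenthetical claim that the explicit formula $\partial_t W_\eta=\tfrac1\eta q_\eta V(W_\eta)-\tfrac1{\eta^2}\int_x^\infty\exp(\tfrac{x-y}{\eta})q_\eta V(W_\eta)\,\d y$ is "controlled uniformly" in $L^1_{\loc}$ is only justified \emph{because} it coincides with the transport-equation expression you already bounded --- estimated directly it would involve $|q_\eta|_{TV}$, which is not uniform in $\eta$ --- but this does not affect the validity of your main argument.
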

\begin{proof}
The proof consists basically of applying the Ascoli theorem in \cite[Lemma 1]{simon}. We state the details in the following.
Let \(B\) be a Banach space. 

Then, a set \(F\subset C([0,T];B)\) is relatively compact in \(C([0,T];B)\) iff
\begin{itemize}
    \item \(F(t)\:\big\{f(t)\in B: f\in F\}\) is relatively compact in \(B\ \forall t\in[0,T]\).
    \item \(F\) is uniformly equi-continuous, i.e.\ 
    \[
    \forall \sigma\in\R_{>0}\ \exists \delta\in\R_{>0}\ \forall f\in F\ \forall (t_{1},t_{2})\in [0,T]^{2} \text{ with } |t_{1}-t_{2}|\leq \delta:\ \|f(t_{1})-f(t_{2})\|_{B}\leq \sigma.
    \]
\end{itemize}
We start with setting \(B=L^{1}_{\loc}(\R)\) and \(F(t)\:\{W_{\eta}(t,\cdot)\in L^{1}_{\loc}(\R): \eta\in\R_{>0}\}\).
Thanks to \cref{theo:total_variation_bound} we know that \(W_{\eta}(t,\cdot)\) has a uniform total variation bound and by \cite[Theorem 13.35]{leoni}, the set \(F(t)\) is compact in \(L^{1}_{\loc}(\R)\), i.e.\ \[F(t)\overset{\text{c}}\subseteq L^{1}_{\loc}(\R),\quad \forall t\in[0,T].\]
It remains to show the second point, the uniformly equi-continuity. To this end, we again smooth the initial datum \(q_{0}\) by a \(q_{0}^{\eps}\) for \(\eps\in\R_{>0}\) as in the proof of \cref{theo:total_variation_bound} and call the corresponding smooth nonlocal term \(W_{\eta}^{\eps}\) for an \(\eta\in\R_{>0}\). Then, we can estimate 
\begin{align*}
    \big\|W_{\eta}^{\eps}(t_{1},\cdot)-W_{\eta}^{\eps}(t_{2},\cdot)\big\|_{L^{1}(\R)}
    &=\left\|\int_{t_{2}}^{t_{1}}\partial_{t}W_{\eta}^{\eps}(s,\cdot)\dd s\right\|_{L^{1}(\R)}
    \intertext{plugging \cref{eq:W} in}
    &\leq \left\|\int_{t_{2}}^{t_{1}}V(W_{\eta}^{\eps}(s,\cdot))\partial_{2}W^\eps_{\eta}(s,\cdot)\dd s\right\|_{L^{1}(\R)}\\
    &\quad +\left\|\int_{t_{2}}^{t_{1}}\tfrac{1}{\eta}\int_{\ast}^{\infty}\exp(\tfrac{\ast-y}{\eta})V'(W_{\eta}^{\eps}(s,y))\partial_yW_{\eta}^{\eps}(s,y)W^{\eps}_{\eta}(s,y)\dd y\dd s\right\|_{L^{1}(\R)}
    \intertext{integrating by parts}
    &\leq \|V\|_{L^{\infty}((0,\|q_{0}\|_{L^{\infty}(\R)}))}|W_{\eta}^{\epsilon}|_{L^{\infty}((0,T);TV(\R))}|t_{1}-t_{2}|\\
    &\quad +\|V'\|_{L^{\infty}((0,\|q_{0}\|_{L^{\infty}(\R)}))}\|W_{\eta}^{\epsilon}\|_{L^{\infty}((0,T);L^{\infty}(\R))}|W_{\eta}^{\epsilon}|_{L^{\infty}((0,T);TV(\R))}|t_{1}-t_{2}|\\
    \intertext{applying \cref{theo:total_variation_bound} and \cref{eq:uniform_bound}}
    &\leq \big(\|V\|_{L^{\infty}((0,\|q_{0}\|_{L^{\infty}(\R)}))}+\|V'\|_{L^{\infty}((0,\|q_{0}\|_{L^{\infty}(\R)}))}\|q_{0}\|_{L^{\infty}(\R)}\big)|q_{0}|_{TV(\R)}|t_{1}-t_{2}|.
    \end{align*}
As this is a uniform bound in \(\eta\in\R_{>0}\) and \(\eps\in\R_{>0}\) we have the uniform equi-continuity so that we obtain by applying Ascoli's theorem indeed the claimed compactness.
\end{proof}

As a direct result, from the strong convergence of \(W_{\eta}\) we have also the strong convergence of \(q_{\eta}\) to a weak solution of the local conservation law as the following corollary states:
\begin{corollary}[Limit of \(q_{\eta}\) and \(W_{\eta}\) are weak solution to the local equation]\label{cor:limit}
For every sequence \((\eta_{k})_{k\in\N_{\geq1}}\subset\R_{>0}\) with \(\lim_{k\rightarrow\infty}\eta_k=0\) there exists a subsequence (for reasons of convenience again denoted by \(\eta_{k}\)) and a function \(q^{*}\in C\big([0,T];L^{1}_{\loc}(\R)\big)\) so that 
the solution \(q_{\eta_{k}}\in C\big([0,T];L^{1}_{\loc}(\R)\big)\) of the nonlocal conservation law as given in \cref{defi:nonlocal_conservation_law_general} converges in \(C\big([0,T];L^{1}_{\text{loc}}(\R)\big)\) to the limit point \(q^{*}\) and so does the nonlocal term \(W_{\eta_{k}}\) as given in \cref{defi:W_nonlocal}. Additionally, \(q^{*}\) is a weak solution of the local conservation law  \crefrange{eq:local_conservation_law}{eq:local_conservation_law_initial_datum}. In equations,
\begin{align*}
    \lim_{\eta\rightarrow 0 }\|q_{\eta}-q^{*}\|_{C([0,T];L^{1}_{\text{loc}}(\R))}=0 \ \wedge\ \lim_{\eta\rightarrow 0 }\|W_{\eta}-q^{*}\|_{C([0,T];L^{1}_{\text{loc}}(\R))}=0,
\end{align*}
when \(q^{*}\) satisfies
\(\forall \phi\in C^{1}_{\text{c}}((-42,T)\times\R)\)
\begin{equation}
\iint_{\OT}\partial_t\phi(t,x)q^{*}(t,x)+\partial_x\phi(t,x)V(q^{*}(t,x))q^{*}(t,x)\dd x\dd t+\int_{\R}\phi(0,x)q_{0}(x)\dd x =0 .
\label{eq:weak_solution_local}
 \end{equation}
\end{corollary}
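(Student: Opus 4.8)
\section*{Proof of \texorpdfstring{\cref{cor:limit}}{Corollary 4.2}: proposal}

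The plan is to combine the strong compactness from \cref{theo:compactness} with the pointwise identity \cref{eq:WWxq} in order to identify a common limit of $q_\eta$ and $W_\eta$, and then to pass to the limit in the weak formulation \cref{eq:weak_solution_1}.

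First, I would use \cref{theo:compactness} to extract, from an arbitrary sequence $\eta_k\to0$, a subsequence (not relabeled) along which $W_{\eta_k}\to q^{*}$ in $C([0,T];L^{1}_{\loc}(\R))$ for some $q^{*}\in C([0,T];L^{1}_{\loc}(\R))$. To show that $q_{\eta_k}$ converges to the \emph{same} limit, I would rearrange \cref{eq:WWxq} as $q_\eta(t,\cdot)=W_\eta(t,\cdot)-\eta\,\partial_x W_\eta(t,\cdot)$. Since $W_\eta(t,\cdot)$ is Lipschitz (hence absolutely continuous) by \cref{lem:transport_nonlocal_rhs} and has total variation at most $|q_{0}|_{TV(\R)}$, uniformly in $t$ and $\eta$, by \cref{theo:total_variation_bound}, a monotone-convergence argument gives $\partial_x W_\eta(t,\cdot)\in L^{1}(\R)$ with $\|\partial_x W_\eta(t,\cdot)\|_{L^{1}(\R)}\le|q_{0}|_{TV(\R)}$, whence
\[
\|q_\eta-W_\eta\|_{C([0,T];L^{1}(\R))}\le\eta\,|q_{0}|_{TV(\R)}\longrightarrow0\quad\text{as }\eta\to0 .
\]
This forces $q_{\eta_k}\to q^{*}$ in $C([0,T];L^{1}_{\loc}(\R))$ as well, which is exactly the first assertion of the corollary.

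Next, to obtain \cref{eq:weak_solution_local}, I would pass to the limit term by term in \cref{eq:weak_solution_1}. The maximum principle \cref{eq:uniform_bound} gives $0\le\essinf_{x\in\R}q_{0}(x)\le q_{\eta_k}\le\|q_{0}\|_{L^{\infty}(\R)}$ a.e.; as $W_{\eta_k}$ is a probability-weighted average of the values of $q_{\eta_k}$, it satisfies the same bounds, and therefore so does $q^{*}$. On the compact interval $I\: [\essinf_{x\in\R}q_{0}(x),\|q_{0}\|_{L^{\infty}(\R)}]$ the velocity $V$ is Lipschitz (by \cref{ass:input_datum}, $V\in W^{1,\infty}_{\loc}$). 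Fixing $\phi\in C^{1}_{\text{c}}((-42,T)\times\R)$ with spatial support in a compact set $K$, the linear term converges because $q_{\eta_k}\to q^{*}$ in $C([0,T];L^{1}(K))$, and for the flux term I would write
\[
V(W_{\eta_k})q_{\eta_k}-V(q^{*})q^{*}=\big(V(W_{\eta_k})-V(q^{*})\big)q_{\eta_k}+V(q^{*})\big(q_{\eta_k}-q^{*}\big)
\]
and bound the two summands by $\|V'\|_{L^{\infty}(I)}\|q_{0}\|_{L^{\infty}(\R)}\,|W_{\eta_k}-q^{*}|$ and $\|V\|_{L^{\infty}(I)}\,|q_{\eta_k}-q^{*}|$, respectively, both of which tend to $0$ in $C([0,T];L^{1}(K))$ by the previous step. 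The initial-datum term is independent of $k$. Sending $k\to\infty$ yields \cref{eq:weak_solution_local}, i.e.\ $q^{*}$ is a weak solution of \crefrange{eq:local_conservation_law}{eq:local_conservation_law_initial_datum}.

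I expect the only genuinely delicate point to be the passage to the limit in the nonlinear, nonlocal flux $V(W_{\eta_k}[q_{\eta_k}])q_{\eta_k}$: in the general nonlocal-to-local problem this is exactly where compactness is needed, and it works here only because \cref{theo:compactness} provides \emph{strong} convergence of $W_{\eta_k}$, while \cref{eq:WWxq} pins $q_{\eta_k}$ to the very same limit, so that the product decomposition above closes. A subsidiary technical check is that $\partial_x W_\eta(t,\cdot)$ is integrable on all of $\R$ (not merely locally), which is what legitimizes the $L^{1}(\R)$ — rather than $L^{1}_{\loc}(\R)$ — bound on $q_\eta-W_\eta$; this is where the Lipschitz regularity from \cref{lem:transport_nonlocal_rhs} and the global total-variation bound from \cref{theo:total_variation_bound} are both used.
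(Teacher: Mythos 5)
Your proposal is correct and follows essentially the same route as the paper: extract a convergent subsequence of $W_{\eta_k}$ via \cref{theo:compactness}, use the identity \cref{eq:WWxq} together with the uniform total variation bound to get $\|q_{\eta_k}-W_{\eta_k}\|_{C([0,T];L^{1}(\R))}\le\eta_k|q_0|_{TV(\R)}$ and hence a common limit $q^{*}$, and then pass to the limit in the weak formulation using the $L^\infty$ bounds. The only difference is that you spell out the decomposition of the flux term $V(W_{\eta_k})q_{\eta_k}-V(q^{*})q^{*}$, which the paper leaves implicit; this is a welcome but not substantive addition.
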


\begin{proof}
Thanks to \cref{theo:compactness}, \(\mathcal{W}\:\{W_{\eta_{k}}; k\in\N_{\geq1}\}\subset C\big([0,T];L^{1}_{\loc}(\R)\big),\) i.e., the set \(\mathcal{W}\) is compact in \(C\big([0,T];L^{1}_{\loc}(\R)\big)\) and there exists a limit point \(q^{*}\in C\big([0,T];L^{1}_{\loc}(\R)\big)\) so that we obtain
\[
\lim_{k\rightarrow\infty}\|W_{\eta_{k}}-q^{*}\|_{C([0,T];L^{1}_{\loc}(\R))}=0.
\]
The identity in \cref{eq:WWxq}  directly implies
\begin{align*}
    \|W_{\eta_{k}}(t,\cdot)-q_{\eta_{k}}(t,\cdot)\|_{L^1(\R)} &= \eta_{k} |W_{\eta_{k}}(t,\cdot)|_{TV(\R)} \leq \eta_{k}|q_0|_{TV(\R)}
\intertext{and thus we also obtain}
\lim_{k\rightarrow\infty}\|q_{\eta_{k}}-q^{*}\|_{C([0,T];L^1_{\loc}(\R))}&=0.
\end{align*}
It remains to be shown that \(q^{*}\) is indeed a weak solution. This directly follows from the strong convergence of \(q_{\eta_{k}}\) to \(q^{*}\) in \(C\big([0,T];L^{1}_{\text{loc}}(\R)\big)\) and due to the essential and uniform bound on \(q_{\eta}\) as given in \cref{theo:existence_uniqueness} in \cref{eq:uniform_bound}. 
\end{proof}
However, the previous result can actually be strengthened and indeed we obtain that the limit \(q^{*}\) is unique (in particular, every subsequence converges)  and that this limit is the weak entropy solution of the corresponding local conservation law.

\begin{theorem}[Convergence to the Entropy solution]\label{theo:entropy}
Given \cref{ass:input_datum} the nonlocal term \(W_{\eta}[q_{\eta}]\) and the corresponding nonlocal solution \(q_{\eta}\in C\big([0,T];L^{1}_{\loc}(\R)\big)\) of the nonlocal conservation law \cref{defi:nonlocal_conservation_law_general} converges in \(C\big([0,T];L^{1}_{\loc}(\R)\big)\) to the Entropy solution of the corresponding local conservation law (see \crefrange{eq:local_conservation_law}{eq:local_conservation_law_initial_datum}).
\end{theorem}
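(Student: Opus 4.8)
The plan is to combine the compactness and identification results already in hand with the known entropy-consistency theory for exponential-kernel nonlocal conservation laws. Concretely, by \cref{cor:limit} we already know that for every sequence \(\eta_k \downarrow 0\) there is a subsequence along which both \(q_{\eta_k}\) and \(W_{\eta_k}\) converge in \(C([0,T];L^1_{\loc}(\R))\) to a common limit \(q^*\) which is a weak solution of \crefrange{eq:local_conservation_law}{eq:local_conservation_law_initial_datum}. So the whole task reduces to two points: (i) upgrade ``weak solution'' to ``entropy solution'', and (ii) remove the passage to a subsequence, i.e.\ show the full family \((q_\eta)_\eta\) converges. Point (ii) is immediate once (i) is done, because the entropy solution of \crefrange{eq:local_conservation_law}{eq:local_conservation_law_initial_datum} with datum \(q_0\) is unique (by Kružkov): every subsequential limit equals this unique object, hence the whole net converges.

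For point (i), the main step is to invoke the result of \cite{bressanshen2020}. That paper establishes, for nonlocal conservation laws with exponential kernel and reasonably general flux, that any limit of the nonlocal solutions as \(\eta \to 0\) satisfies the Kružkov entropy inequalities — indeed, one shows that along the approximating sequence an approximate entropy inequality holds with an error term that vanishes as \(\eta \to 0\). First I would recall, via the identity \cref{eq:WWxq} rewritten as \(q_\eta = W_\eta - \eta \partial_x W_\eta\), that the nonlocal flux \(V(W_\eta[q_\eta])q_\eta\) can be expressed through \(W_\eta\) alone, and that the uniform \(L^\infty\) bound \cref{eq:uniform_bound} together with the uniform \(TV\) bound \cref{theo:total_variation_bound} on \(W_\eta\) provides exactly the compactness and boundedness hypotheses required to apply the entropy-consistency argument of \cite{bressanshen2020}. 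Then I would state that, for each fixed Kružkov entropy pair \((|q-c|, \sgn{q-c}(f(q)-f(c)))\) with \(f(s) = V(s)s\), testing the nonlocal equation against a nonnegative test function and passing to the limit \(\eta_k \to 0\) yields the entropy inequality for \(q^*\); the passage is justified by the strong \(C(L^1_{\loc})\) convergence of both \(q_{\eta_k}\) and \(W_{\eta_k}\) to \(q^*\) and dominated convergence using \cref{eq:uniform_bound}.

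I expect the main obstacle to be purely bibliographical/technical rather than conceptual: the results of \cite{bressanshen2020} are stated in their own framework (their assumptions on the flux \(V\), their normalization of the kernel, possibly a strict positivity or regularity requirement), and one must check that \cref{ass:input_datum} here — \(q_0 \in L^\infty \cap TV\), \(q_0 \ge 0\), \(V \in W^{1,\infty}_{\loc}\) with \(V' \le 0\) on the relevant range — fits within the scope of their entropy-consistency statement, or else reproduce the short entropy-inequality computation directly. In fact the needed computation is not long once one has the uniform \(TV\) bound: writing the nonlocal equation in the conservative form \(\partial_t q_\eta + \partial_x f(q_\eta) = \partial_x\big((f(q_\eta) - V(W_\eta)q_\eta)\big)\) and noting that the right-hand side, which measures the discrepancy between the nonlocal and local fluxes, is controlled by \(\|V'\|_\infty \|q_\eta\|_\infty |W_\eta - q_\eta|\le \eta\,\|V'\|_\infty\|q_0\|_\infty|W_\eta|_{TV} = O(\eta)\) in \(L^1\), one sees that the entropy defect tends to zero. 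Combining this vanishing-defect estimate with the convexity of \(|{\cdot} - c|\) and the standard doubling-of-variables / mollification bookkeeping gives the Kružkov inequalities for \(q^*\) in the limit.

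Finally, with \(q^*\) identified as \emph{the} entropy solution for every subsequential limit, uniqueness forces \(q_\eta \to q\) and \(W_\eta \to q\) in \(C([0,T];L^1_{\loc}(\R))\) along the entire family \(\eta \to 0\), which is the assertion of \cref{theo:entropy}. \(\square\)
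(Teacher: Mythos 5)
Your main line of argument is exactly the paper's: \cref{cor:limit} gives subsequential convergence of \(q_\eta\) and \(W_\eta\) to a common weak solution, the entropy property of the limit is obtained by invoking \cite{bressanshen2020} (whose argument, as the paper notes, only needs strong convergence to a weak solution together with the uniform \(TV\) bound on the nonlocal term), and uniqueness of the entropy solution removes the subsequence extraction. One caveat: the ``short entropy-inequality computation'' you sketch as a fallback is not actually routine --- the \(O(\eta)\) bound in \(L^1\) on the flux defect \(\big(V(q_\eta)-V(W_\eta)\big)q_\eta\) does not feed into the standard Kru\v{z}kov doubling argument, because testing \(\partial_x[\text{defect}]\) against \(\sgn{q_\eta-c}\) produces, after integration by parts, a term controlled only by the total variation of \(q_\eta\), which is \emph{not} uniformly bounded in \(\eta\) (only \(W_\eta\) carries the uniform \(TV\) bound of \cref{theo:total_variation_bound}); this obstruction is precisely why \cite{bressanshen2020} works with the single-entropy (minimal entropy) criterion of \cite{otto,panov1994uniqueness} rather than the full Kru\v{z}kov family. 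As long as you rely on the citation rather than on the sketched direct computation, the proof is complete and matches the paper's.
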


\begin{proof}
This is a direct consequence of  the convergence of \(W_{\eta},q_{\eta}\) to a weak solution of the local conservation laws in \(C\big([0,T];L^{1}_{\loc}(\R)\big)\), \cref{cor:limit} and of \cite{bressanshen2020}.
Therein, by taking advantage of the minimal entropy condition in  \cite{otto,panov1994uniqueness}, it is shown that a solution \(q_{\eta}\) of the nonlocal conservation law in \cref{defi:nonlocal_conservation_law_general} with uniform \(TV\) bound converges to the entropy solution of the local problem. However, when checking the proof carefully, it turns out that it suffices to assume that the solution \(q_{\eta}\) converges strongly to a weak solution \(q^{*}\).
\end{proof}

\section{Numerical illustrations}\label{sec:numerics}

\begin{figure}
    \centering
    \includegraphics[scale=0.8,clip,trim=0 25 15 0]{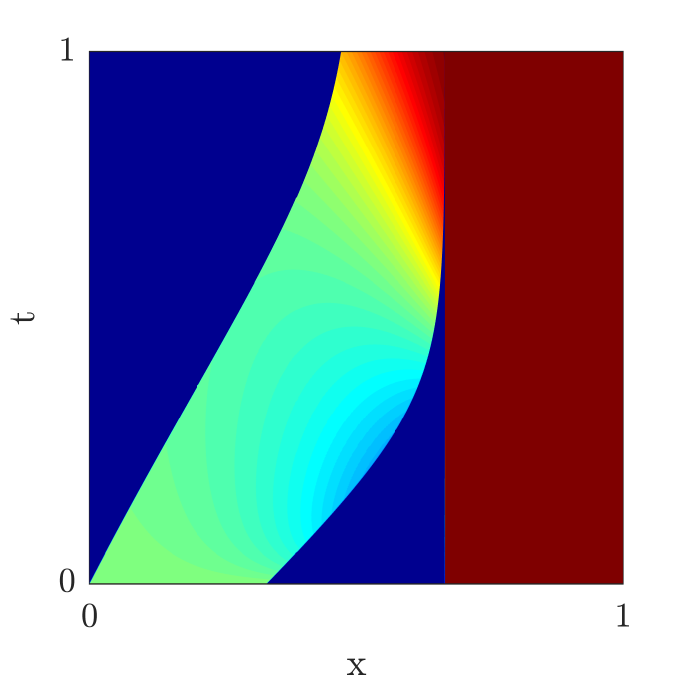}
    \includegraphics[scale=0.8,clip,trim=22 25 15 0]{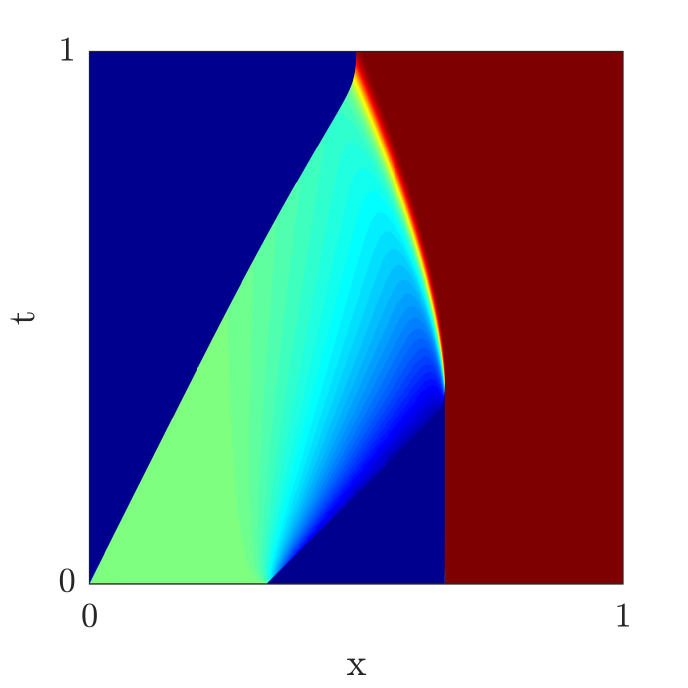}
    \includegraphics[scale=0.8,clip,trim=22 25 15 0]{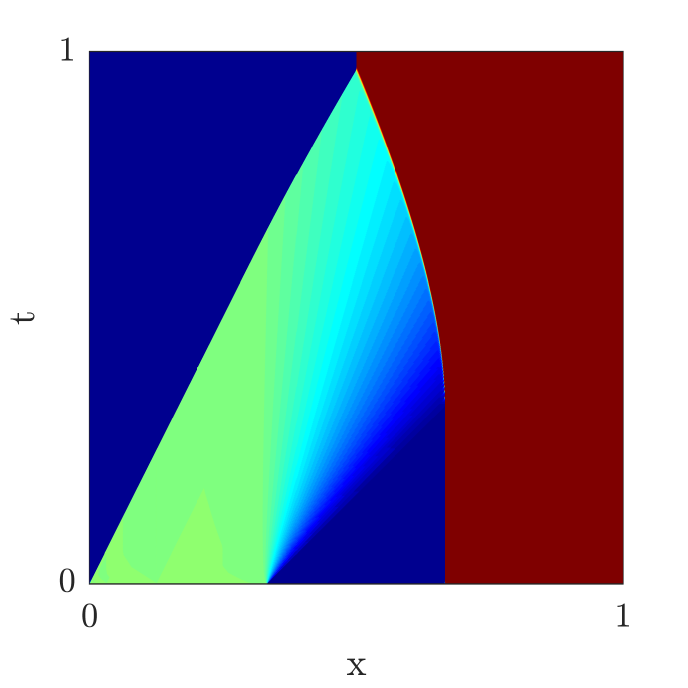}

        \includegraphics[scale=0.8,clip,trim=0 0 15 0]{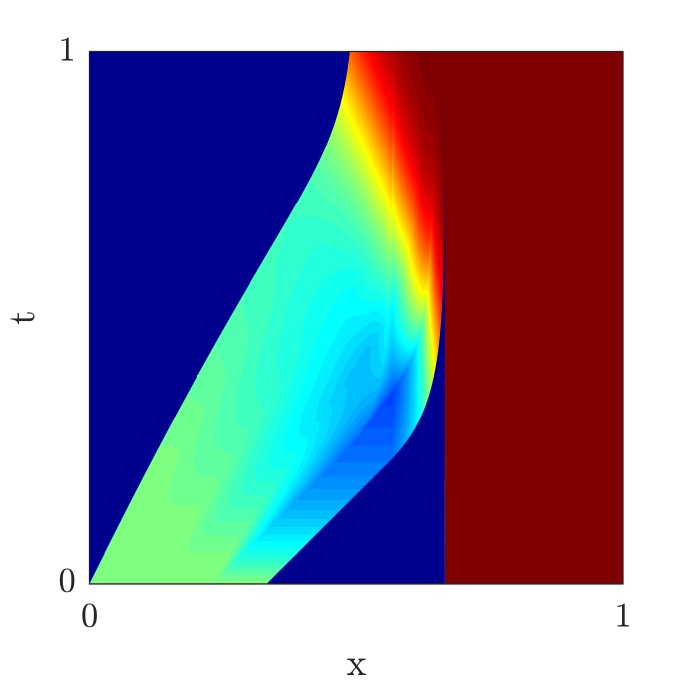}
    \includegraphics[scale=0.8,clip,trim=22 0 15 0]{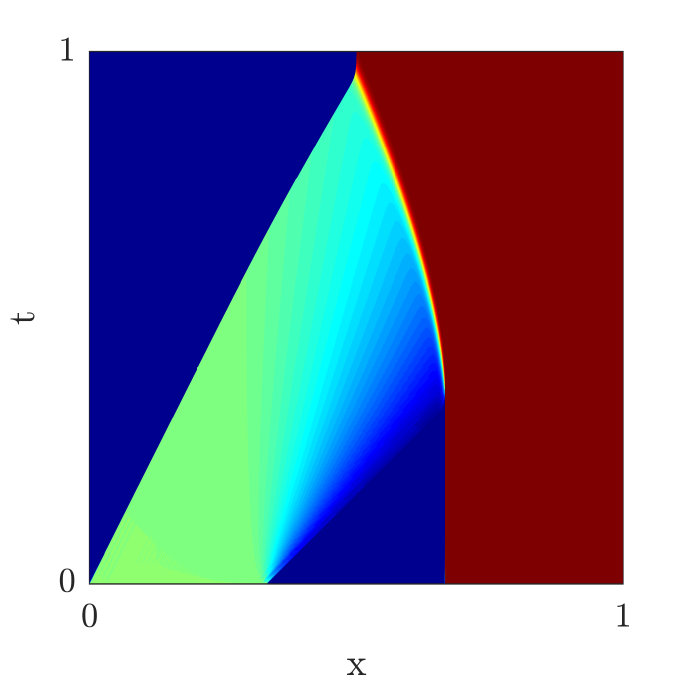}
    \includegraphics[scale=0.8,clip,trim=22 0 15 0]{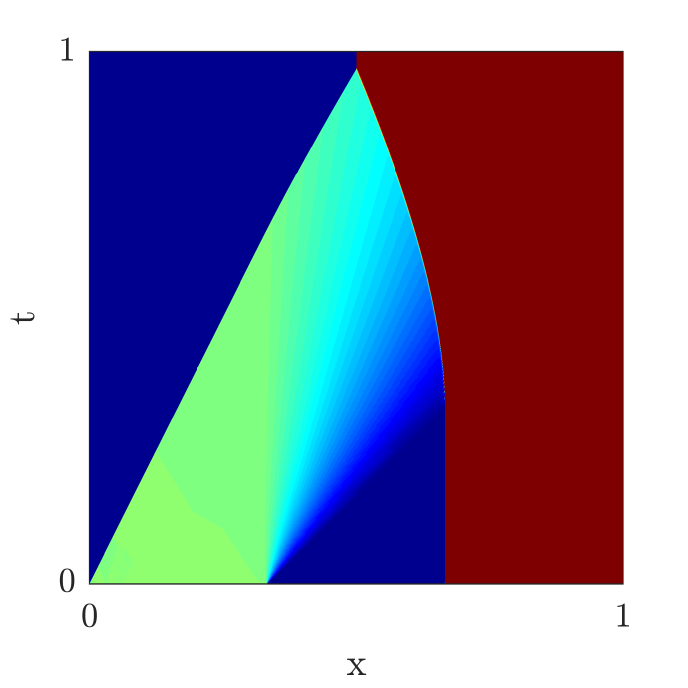}
    
    \caption{Solution of the nonlocal balance law with exponential kernel (\textbf{top}, \cref{defi:W_nonlocal}) and constant kernel (\textbf{bottom}, \cref{eq:const_kernel}) supplemented by the piecewise constant initial datum stated in \cref{eq:ex_initial_datum} plotted in the space-time domain. From left to right $\eta$ is decreasing, \(\eta \in\big\{ 10^{-1},\  10^{-2},10^{-3}\big\}\). The rightmost figure is ``by eye'' not distinguishable from the corresponding local solution. \textbf{Colorbar:} $0\ $\protect\includegraphics[width=2cm]{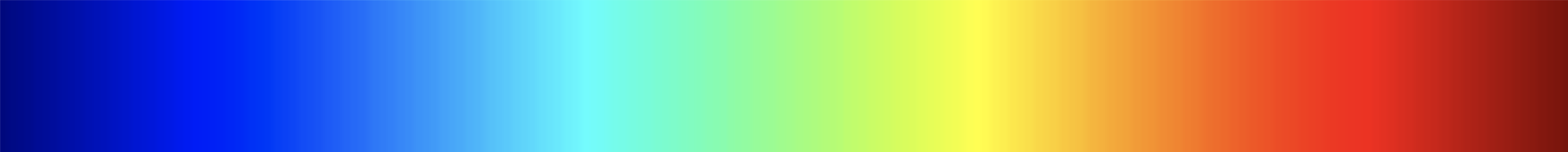}$\ 1$}
    \label{fig:example_3d}
\end{figure}

\begin{figure}
    \centering
    \includegraphics[scale=0.8,clip,trim=0 25 10 10]{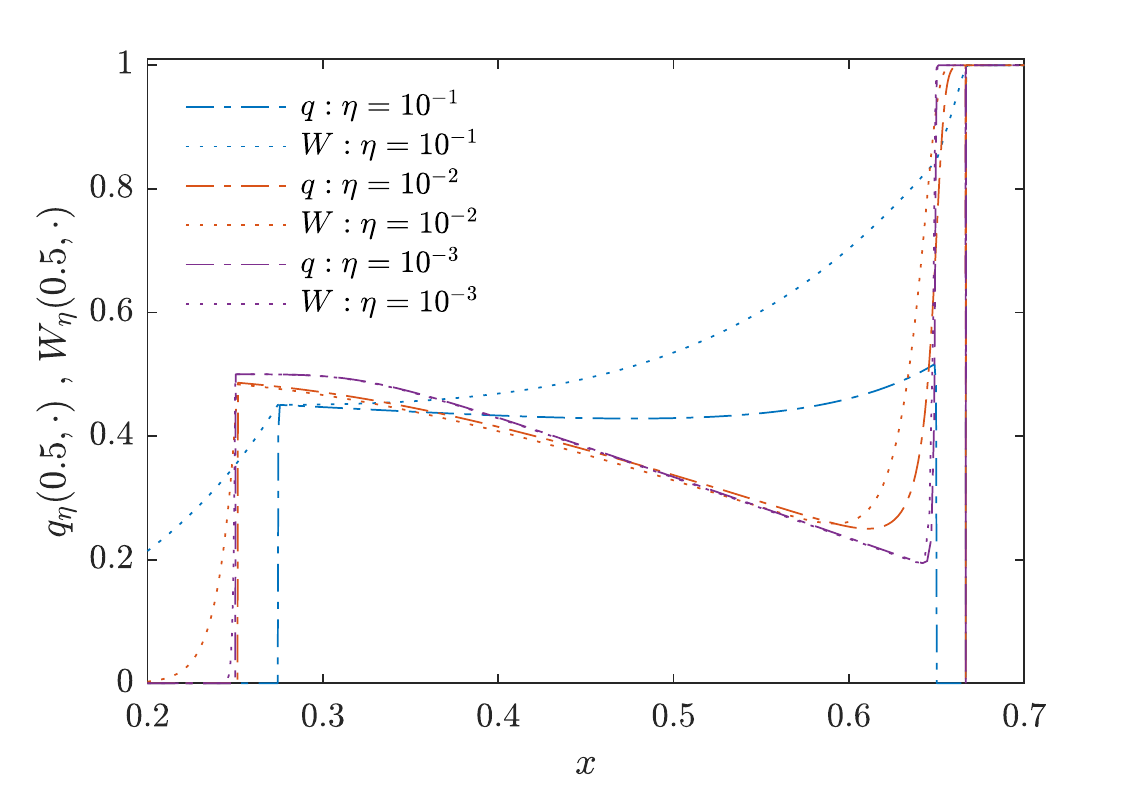}
    \includegraphics[scale=0.8,clip,trim=0 25 10 10]{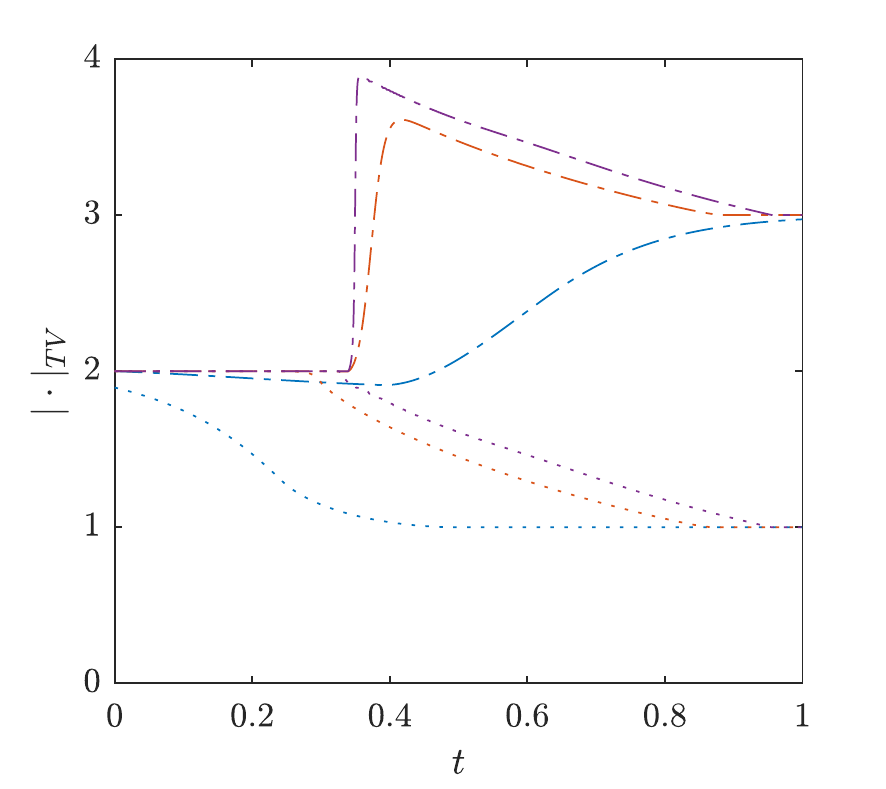}
    
        \includegraphics[scale=0.8,clip,trim=0 0 10 10]{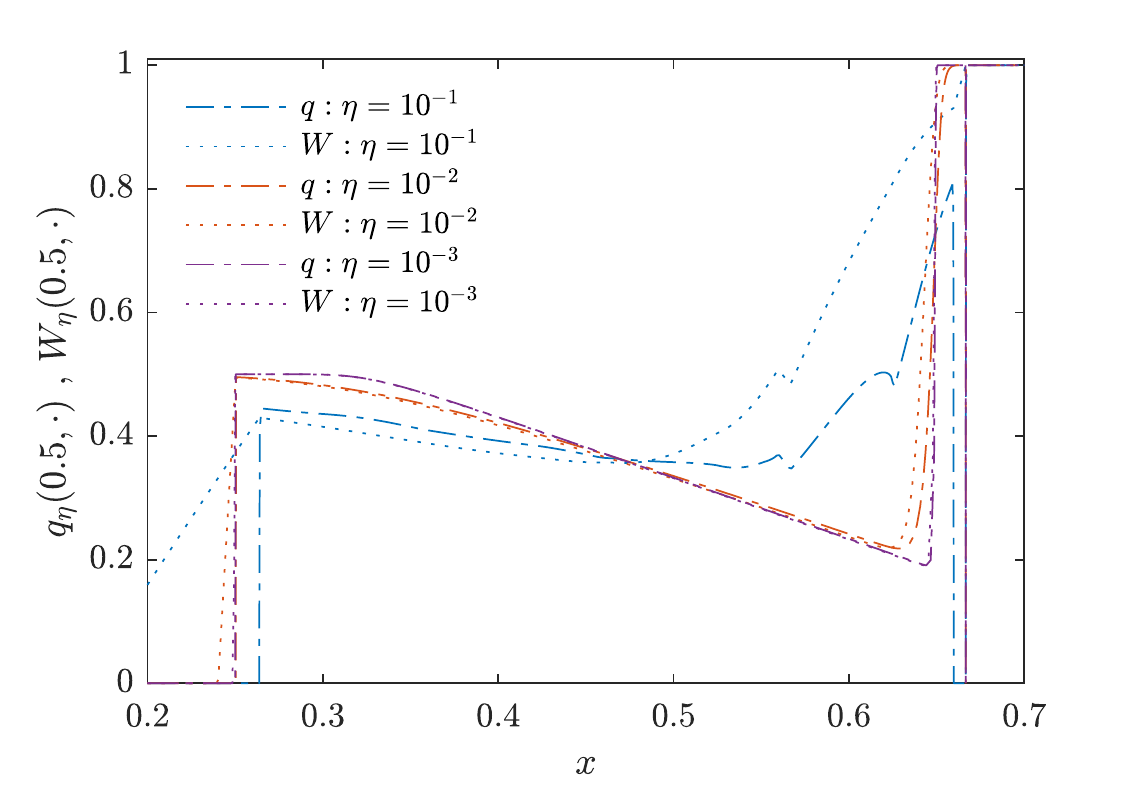}
    \includegraphics[scale=0.8,clip,trim=0 0 10 10]{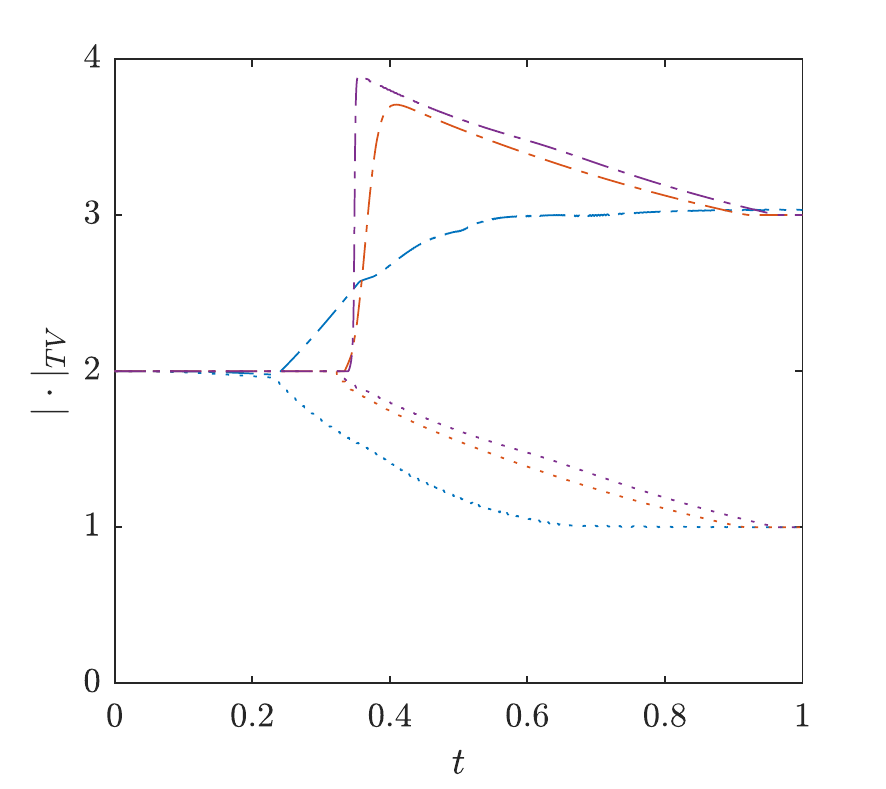}
    
    \caption{\textbf{Left:} Solution of the nonlocal balance law  with exponential kernel (\textbf{top}, \cref{defi:W_nonlocal}) and constant kernel (\textbf{bottom}, \cref{eq:const_kernel}) supplemented by the piecewise constant initial datum stated in \cref{eq:ex_initial_datum} and its corresponding nonlocal term plotted for $t=0.5$ and $\eta \in \{10^{-1},10^{-2},10^{-3}\}$. \textbf{Right:} Evolution of the corresponding total variations showing a monotone decreasing nature in terms of the nonlocal term (dotted lines) which is also the case for the local counterpart. In terms of the total variation of the solution itself (dashed dotted lines), the total variation approaches $3$. This is due to that the zero in the initial datum ($x\in(\tfrac{1}{3},\tfrac{2}{3})$) moves and shrinks but does not vanish for all $\eta \in\R_{>0}$ and $t\in(0,T],$ resulting in an additional total variation of $2$ compared to the total variation of the solution to the local equation being $1$ for all $t\in(1,T)$.}
    \label{fig:examle_2d} 
\end{figure}

Some numerical results concerning the convergence can already be found in \cite{MR3944408}. We rely on a solver based on characteristics \cite{pflug2} which is non dissipative.
On the basis of a simple example we want to shed more light on the difference between the total variation of $q_\eta$ and the nonlocal counterpart $W_\eta[q_\eta]$. We further demonstrate that the result should still hold for general nonlocal kernels by using as ``worst case'' a constant kernel, i.e. for \(q\in C\big([0,T];L^{1}_{\loc}(\R)\big)\cap L^{\infty}((0,T);L^{\infty}(\R))\)
\begin{align}
    W_\eta[q](t,x) \: \tfrac{1}{\eta}\int_{x}^{x+\eta}q(t,y)\dd y,\qquad x\in\R.
    \label{eq:const_kernel}
\end{align}
It seems to be true that a total variation bound on the nonlocal term holds and that also the solution still converges to the local entropy solution.
The following examples rely on the following initial datum:
\begin{align}\label{eq:ex_initial_datum}
    q_0 &\equiv \tfrac{1}{2}\chi_{(0,\frac{1}{3})} + \chi_{\R_{>\frac{2}{3}}}
\end{align}

The crucial point of the chosen initial datum are the roots for $x\in (\tfrac{1}{3},\tfrac{2}{3})$. These roots are moving but kept in the nonlocal solution $q_\eta$ for all times. This results in an increase of the total variation. In the nonlocal term $W$ there are by construction of the initial datum as well as the exponential kernel no roots and the solution is smoothed resulting in a -- as proven -- non-increasing total variation.

\section{Future work}
\label{sec:conclusions}

What remains an open question is whether it is possible to obtain the same results for different kernels still satisfying the required monotonicity assumption for the solution to satisfy a maximum principle (see for this particularly \cref{sec:numerics} and \cref{fig:example_3d} bottom). The considered exponential kernel clearly provides a nice structure which seems to be crucial in our analysis for showing the stated results.

Another interesting problem consists of what happens in the case of a fully symmetric nonlocal kernel which is sensitive to both propagating directions. However, such a kernel immediately implies that the solutions cannot satisfy a maximum principle (for an illustration see for instance \cite[Example 7.3, Fig. 9]{MR3944408}). Then, recalling \cite{1808.03529} it is also apparent that one cannot expect the solution to converge in a strong or weak sense to the entropy solution, but there is hope -- compare particularly the numerics in \cite[Example 7.3]{MR3944408} -- for convergence in a measure valued sense.

\section*{Acknowledgments}
G.~M.~Coclite is a member of the Gruppo Nazionale per l'Analisi Matematica, la Probabilit\`a e le loro Applicazioni (GNAMPA) of the Istituto Nazionale di Alta Matematica (INdAM). He has been partially supported by the  Research Project of National Relevance ``Multiscale Innovative Materials and Structures'' granted by the Italian Ministry of Education, University and Research (MIUR Prin 2017, project code 2017J4EAYB and the Italian Ministry of Education, University and Research under the Programme Department of Excellence Legge 232/2016 (Grant No. CUP - D94I18000260001).

J.-M.~Coron acknowledges funding from the Miller Institute and from the Agence Nationale de La Recherche (ANR), grant ANR Finite4SoS (ANR-15-CE23-0007). He also thanks the Miller Institute and UC Berkeley for their hospitality.

N.~De Nitti has been partially supported by the Alexander von Humboldt Foundation and by the TRR-154 project of the DFG.  

L.~Pflug has been supported by the Deutsche Forschungsgemeinschaft (DFG, German Research Foundation) -- Project-ID 416229255 -- SFB 1411.

\bibliographystyle{abbrv}
\bibliography{NonlocalToLocal-ref}

\end{document}